\theoremstyle{plain}
\newtheorem{theorem}{Theorem}[section]
\newtheorem{proposition}[theorem]{Proposition}
\newtheorem{definition}[theorem]{Definition}
\newtheorem{lemma}[theorem]{Lemma}
\newtheorem{corol}[theorem]{Corollary}
\theoremstyle{remark}
\newtheorem{example}[theorem]{Example}
\title{Harmonically free group actions and equivariant \\ bifurcation  in the Yamabe problem}
\author{\normalsize{\textsc{Ana Cláudia da Silva Moreira}}\\ {\footnotesize anaclaudia77br\MVAt gmail.com.br}\\ [.5cm]Departamento de Matemática\\ Universidade de São Paulo\\ Brazil}
\date{ }
\begin{document}
	
\maketitle

\begin{abstract}
	We study multiplicity of constant scalar curvature metrics in products of a compact closed manifold and a compact manifold with boundary using equivariant bifurcation theory.
\end{abstract}

{\bf Key Words:} riemannian geometry, conformal geometry, scalar curvature, isometric action, equivariant bifurcation theory.
	
\section{Introduction}\label{chp:equivariante}

The classical Yamabe problem,
\begin{center}
\textit{Let $(M,g)$ be a compact Riemannian manifold (without boundary) of dimension $m \geq 3$. \\ Then, there exists a metric $\tilde{g}$, conformal to $g$, having constant scalar curvature,}
\end{center}
proposed as a conjecture by the Japanese mathematician Hidehiko Yamabe and completely proved only 29 years latter with the combined work of Yamabe \cite{Yamabe}, Trudinger \cite{Trudinger}, Aubin \cite{Aubin} and Schoen \cite{SchoenI}, \cite{SchoenII}, gave rise to a torrent of works, questions and results in Mathematics; new versions of the problem emerged and one can find it as an inspiration for the most varied works.

In 2012, Lima, Piccione and Zedda (in \cite{Lima}) proposed a slightly more general statement of an abstract bifurcation result of Smoller and Wasserman \cite{Wasserman} to determine the existence of multiple constant scalar curvature metrics on products of compact manifolds without boundary, based on a jump in the Morse index in a variational context. In fact, bifurcation techniques in the Yamabe problem have been used in many problems like in the case of collapsing Riemannian manifolds, and in the case of homogeneous metrics on spheres, see references  \cite{BettiolIII} e \cite{Bettiol0}. 

In manifolds with boundary, the Yamabe problem has several formulations:
\begin{center}
	\begin{minipage}{12cm}
		{\it Let $(M, \overline{g})$ be a compact Riemannian manifold with boundary $\partial M \neq \emptyset$, $\mbox{dim }M = m\geq 3$. It questions the existence of a metric $\tilde{g} \in [\overline{g}]$ such that
			\begin{itemize}
				\item[(i)] the scalar curvature $R_{\tilde{g}}$ is a nonzero constant and the boundary is a minimal submanifold of $M$,
				\item[(ii)] $R_{\tilde{g}} = 0$ and $\partial M$ is a submanifold of $M$ with nonzero constant mean curvature,
				\item[(iii)] the scalar curvature $R_{\tilde{g}}$ and the mean curvature of the boundary $H_{\tilde{g}}$ are both nonzero constants.
		\end{itemize}}
	\end{minipage}
\end{center}
The existence of solution for these problems was proved in the works of Cherrier \cite{Cherrier}, Escobar \cite{EscobarII}, \cite{EscobarIII}, \cite{EscobarIV}, Almaraz \cite{Almaraz}, Han \cite{Han}, Marques \cite{MarquesI}, \cite{MarquesII} and others.

In \cite{Diaz} we studied multiplicity of solutions of the the Yamabe problem, according to formulation $(i)$ above, in a manifold $(M, \overline{g}_s)$ obtained as the product of a compact Riemannian manifold $(M_1, g^{(1)})$ (without boundary) with constant scalar curvature and a compact Riemannian manifold $(M_2, \overline{g}^{(2)})$ with constant scalar curvature and minimal boundary, where $M = M_1 \times M_2$, $\partial M = M_1 \times \partial M_2 \neq \emptyset$, and $\overline{g}_s = g^{(1)} \oplus s \overline{g}^{(2)}$ is a family of metrics of constant scalar curvature and vanishing mean curvature of the boundary, for all $s \in (0, \infty)$.

The Yamabe problem, in formulation $(i)$, is mathematically translated in an elliptic non-linear P.D.E. eigenvalue problem. Consider a conformal metric $\tilde{g} = u^{\frac{4}{m-2}} \overline{g}$, where $u$ is a positive smooth function on $M$. If $R_{\tilde{g}}$ and $R_{\overline{g}}$ denote the scalar curvature with respect to $\tilde{g}$ and $\overline{g}$ respectively, then the Laplacian relate the two of them by means of the Neumann problem:
\begin{equation}\label{eq:EDP}
\left\{
\begin{array}{cccl}
\displaystyle\frac{4(m-1)}{m-2} \Delta_{\overline{g}} u + R_{\overline{g}}u - K u ^{\frac{m+2}{m-2}}& =& 0, & \mbox{em } M \\
(m-1) \partial_{\eta_{\overline{g}}} u + \displaystyle\frac{m-2}{2} H_{\overline{g}} u &=&  0, & \mbox{em } \partial{M}
\end{array}
\right. 
\end{equation}
where
$K = \displaystyle\frac{(m-2)}{4(m-1)}R_{\tilde{g}}$
is constant. The problem has a variational structure, that is, $u>0$ is a solution for \eqref{eq:EDP} if and only if it is a critical point for the Hilbert-Einstein functional
$$
\begin{array}{cc}
F(g) = \displaystyle\int_M R_{g} \ \omega_{g},
\end{array}
$$
restrict to the normalized conformal class, with volume $1$, of $\overline{g}$:
$$[\overline{g}]^{0}_{1} = \left\{ \phi \overline{g} : \phi \in \mathcal{C}_{+}^{k, \alpha}(M), \ \partial_{\eta_{\overline{g}}} \phi = 0, \ \displaystyle\int_{M} \phi^{\frac{m}{2}} \ \omega_{\overline{g}} = 1 \right\}.$$

In \cite{Diaz}, we used a rigidity result, adapted from \cite{Uniqueness} to the boundary case, to establish a degeneracy condition on the critical points of $F$, this condition is essential because the bifurcation instants are found among the degeneracy instants. We showed that there is a countable subset $S \subset (0, \infty)$ of instants such that, for all instants $s \in (0, \infty) \backslash S$, the family $\{\overline{g}_s\}_{s>0}$ is rigid, that is, save homotheties, there exists local uniqueness of solution of the Yamabe problem in the normalized conformal classes of the corresponding metrics $\overline{g}_s$. On the other hand, proposing a natural extension of bifurcation theorem of \cite{Lima}, we proved that, \emph{except for a finite number of instants}, all  the others $s_* \in S$ are bifurcation instants, i.e. there is a sequence of solutions $\overline{g}_{s_n}$ of the Yamabe problem converging to $\overline{g}_{s_*}$, each of which represents a second solution of the Yamabe problem in the correspondent $\overline{g}_s$-normalized conformal class. These were the same results obtained by Lima, Piccione and Zedda in \cite{Lima}, when studying multiplicity of solutions of the Yamabe problem in product manifolds (without boundary).

The motivation of this work is to classify the \emph{finite number of instants} that remained undecidable in \cite{Diaz}, called \emph{neutral instants}. In this article, we introduce the notion of \emph{harmonically free action} of a Lie group $G$, necessary to establish bifurcation in cases where it can not be detected a jump in the Morse index.

\begin{definition}
	We say that the isometric action of a Lie group $G$ on a Riemannian manifold $(N,h)$ is harmonically free if, given an arbitrary family of eigenspaces of the Laplacian $\Delta_h$, pairwise distinct,
	$$V_1, V_2, \ldots, V_r, V_{r+1}, \ldots, V_{r+s}, \ \ \ r, s \geq 1,$$
	and integers $n_{\ell} \geq 0$, with $\ell = 1, \ldots, r+s$, not all zero, the (anti-)representations
	$$\bigoplus_{\ell = 1}^{r}  n_{\ell} \cdot \pi_{\ell} \ \ \mbox{ e }  \ \ \bigoplus_{\ell=r+1}^{r+s} n_{\ell} \cdot \pi_{\ell},$$
	are non-equivalents. Here $\pi_{\ell}$ denotes the natural representation of $G$ on $V_\ell$ (obtained by right compositions with the isometries of the $G$-action).
\end{definition}

 Harmonically free actions are easily found among compact symmetric spaces of rank one: 
 \begin{itemize}
 	\item[(i.)] the real projective space, with $M = \mathbb{R}P^n$, $G=O(n+1)$,
 	\item[(ii.)] the complex projective space, with $M=\mathbb{C}P^n$, $G=U(n+1)$, 
 	\item[(iii.)] the quaternionic projective space, with $M=\mathbb{H}P^n$, $G = Sp(n+1)$, 
 	\item[(iv.)] the Cayley plane, with $M=\mathbb{P}^2(Ca)$, $G=F_4$. 
 \end{itemize}

In addition to this notion of harmonically free action, the facts that the eigenspaces of the Laplacian are invariant by the isometric action of $G$ (Proposition \ref{prop:autesp}) and that its isometric action determines (anti-)representations of $G$ in the eigenspaces of the Laplacian (Proposition \ref{prop:repres}) are crucial for the development of our main result. Consider the following setup. Let $(M_1, g^{(1)})$ be a compact Riemannian manifold with positive constant scalar curvature and let $(M_2, \overline{g}^{(2)})$ be a compact Riemannian manifold with boundary $\partial M \neq \emptyset$, positive constant scalar curvature and vanishing mean curvature of the boundary. Assume the pair $(g^{(1)}, \overline{g}^{(2)})$ is non-degenerate. Consider the product manifold $M = M^{(1)} \times M^{(2)}$, of dimension $m \geq 3$ and boundary $\partial M = M^{(1)} \times \partial M^{(2)} \neq \emptyset$. Let $\overline{g}_s = g^{(1)} \oplus s \overline{g}^{(2)}$, with $s>0$, be a family of metrics in $M$. Let $G$ be a connected Lie group action by isometries on $M_1$.

\begin{theorem}
	If the action of $G$ on $M_1$ is harmonically free, then every degenerate instant for the family of operators $\left\{\mathcal{J}_s \right\}_{s>0}$ is an instant of bifurcation for the family $\{\overline{g}_s\}_{s>0}$ of solutions of the Yamabe problem in the compact manifold $M$ with minimal boundary.
\end{theorem}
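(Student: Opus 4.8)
The plan is to show that equivariant bifurcation theory applies at every degenerate instant once the ordinary (non-equivariant) Morse-index jump argument from \cite{Lima,Diaz} fails. The strategy is to replace the scalar Morse-index invariant, which may not jump at a neutral instant, by a finer $G$-equivariant invariant that is forced to change. The setup is that the Jacobi operators $\mathcal{J}_s$ act on the normalized conformal class and their kernel becomes nontrivial precisely at the degenerate instants; the eigenvalues of $\mathcal{J}_s$ are built from the eigenvalues of $\Delta_{g^{(1)}}$ on $M_1$ together with the spectral data of the boundary-value problem on $M_2$. First I would fix a degenerate instant $s_*$ and, using Proposition \ref{prop:autesp} and Proposition \ref{prop:repres}, observe that the kernel of $\mathcal{J}_{s_*}$ decomposes as a direct sum of tensor products $V_\ell \otimes W$, where the $V_\ell$ are eigenspaces of $\Delta_{g^{(1)}}$ carrying the $G$-representations $\pi_\ell$, and $W$ records the $M_2$-factor. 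The $G$-action on $M_1$ then makes the whole functional $F$ restricted to $[\overline{g}_s]^0_1$ into a $G$-invariant variational problem, so the critical set and the Jacobi operators are $G$-equivariant.

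The key step is to track how the negative eigenspace of $\mathcal{J}_s$, viewed as a $G$-representation, changes as $s$ crosses $s_*$. For $s$ slightly below and slightly above $s_*$, the eigenvalues that vanish at $s_*$ are nonzero, and each contributes a block of the form $\bigoplus_\ell n_\ell \cdot \pi_\ell$ to the negative spectrum. The harmonic freeness hypothesis is exactly what I would invoke here: it guarantees that the two representations obtained by collecting, respectively, the eigenvalues that cross from positive to negative and those that cross from negative to positive across $s_*$ can never be equivalent, since they are built from pairwise-distinct Laplace eigenspaces $V_1,\dots,V_r$ on one side and $V_{r+1},\dots,V_{r+s}$ on the other with multiplicities $n_\ell$ not all zero. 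Consequently the $G$-equivariant Morse index, taken as an element of the representation ring $R(G)$, genuinely jumps at $s_*$, even when the ordinary integer Morse index is unchanged because the numerical dimensions happen to balance.

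With the equivariant index jump in hand, I would invoke the equivariant bifurcation theorem in the form used for this variational setting — the $G$-equivariant version of the Smoller--Wasserman result extended to the boundary problem as in \cite{Diaz} — whose hypotheses are: a smooth family of $G$-invariant functionals, a trivial branch of critical points $\overline{g}_s$ that is $G$-fixed, nondegeneracy away from $s_*$, and a change in the equivariant spectral invariant at $s_*$. The conclusion is a sequence $\overline{g}_{s_n} \to \overline{g}_{s_*}$ of nontrivial solutions, each a second solution of the Yamabe problem in its conformal class, which is precisely the assertion. I would take care to verify that the Fredholm and properness conditions needed for the infinite-dimensional equivariant theory hold here, inherited from the analytic framework of \cite{Diaz}.

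The main obstacle I anticipate is the second step: making precise the sense in which the equivariant invariant jumps and connecting it cleanly to the abstract hypothesis of the equivariant bifurcation theorem. Concretely, I must ensure that the generalized kernel at $s_*$ splits into the ``incoming'' and ``outgoing'' pieces as a direct sum of genuine $G$-subrepresentations (not merely as vector spaces), that the crossing eigenvalues are simple enough in $s$ that their representations are well defined on each side, and that the resulting pair of representations matches the pattern $\bigoplus_{\ell=1}^r n_\ell\cdot\pi_\ell$ versus $\bigoplus_{\ell=r+1}^{r+s} n_\ell\cdot\pi_\ell$ demanded by the definition of harmonic freeness. The analytic delicacy is that $\mathcal{J}_s$ is a family of operators associated with a Neumann-type boundary problem on the $M_2$-factor, so I would need the spectral decomposition of \eqref{eq:EDP} to interact compatibly with the $G$-decomposition on the $M_1$-factor; verifying this compatibility, and that no accidental equivalence of representations slips through, is where the bulk of the careful work lies.
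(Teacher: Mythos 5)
Your proposal is correct and follows essentially the same route as the paper: decompose the eigenspaces of the crossing eigenvalues as $V_{i}^{(1)}\otimes V_{j}^{(2)}$ with $G$ acting trivially on the second factor, so each contributes $\dim V_{j}^{(2)}$ copies of $\pi_{i}^{(1)}$ to the incoming or outgoing part of the negative eigenspace, and harmonic freeness then forces the representations $\pi^-_{s_*-\varepsilon}$ and $\pi^-_{s_*+\varepsilon}$ to be inequivalent, after which Theorem~\ref{thm:bifeq} applies. The one point you flag as delicate --- that the $M_1$-eigenspaces occurring among the vanishing eigenvalues are pairwise distinct, as Definition~\ref{def:harm} requires --- is settled in the paper by the short observation that for fixed $i$ the map $j\mapsto\sigma_{i,j}(s_*)$ is strictly increasing in $\rho_j^{(2)}$, so each index $i$ can occur in at most one eigenvalue vanishing at $s_*$.
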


This paper is organized as follows. In Section \ref{sec:general} we recall from \cite[Section 2]{Diaz}, basically it defines the manifolds, normalized conformal classes, functional, variations and critical points. In particular, in Subsection \ref{sec:iso_rep} we prove some simple, but important results about isometric actions and representations in our problem context. In Section \ref{sec:rigid}, we remember the rigidity result of \cite{Uniqueness}, naturally extended for manifolds with boundary in \cite{Diaz} and present a natural extension of the equivariant bifurcation theorem (\cite[Theorem~3.4]{Lima}) to the case of manifolds with boundary (Theorem \ref{thm:bifeq}). In Section \ref{sec:harm} we discuss the harmonically free action concept and, finally, in Section \ref{sec:main} we present our main result, Theorem \ref{thm:maineq}.

{\bf Acknowledgment:} The author would like to thank professor Paolo Piccione for his reading of this article and precious suggestions.

\section{General Settings}\label{sec:general}

\subsection{Manifolds and Conformal Classes}\label{sec:variedades}

We summarize here many important facts about conformal classes. The proof for all these results can be found in \cite{Diaz}, however, we repeat the most important ones here for the reader's convenience. 

Let $(M, \overline{g})$ be a $m$-dimensional oriented compact Riemannian manifold with boundary $\partial M \neq \emptyset$, $m \geq 3$, then the space $\Gamma^{k} (T^*M \otimes T^*M)$ of $\mathcal{C}^{k}$-sections of the vector bundle $T^*M \otimes T^*M$ of symmetric $(0,2)$-tensors of class $\mathcal{C}^k$ on $M$ is a Banach space with the norm
$$\left\| \tau \right\|_{\mathcal{C}^{k}} = \max_{j=0, \ldots, k} \left[ \max_{p \in M} \left\| \overline{\nabla}^{(j)} \tau(p) \right\|_{\overline{g}} \right],$$
and therefore, it is a Banach manifold.

We denote by $\mathcal{M}^{k, \alpha}(M)$ the set of all Riemannian metrics of class $\mathcal{C}^{k, \alpha}$ on $M$, with $k \geq 3$ and $\alpha \in (0,1]$. The set $\mathcal{M}^{k, \alpha}(M)$ is an open cone of $\Gamma^{k, \alpha} (T^*M \otimes T^*M)$, so this is a Banach manifold itself and its tangent space is $T_{\overline{g}}\mathcal{M}^{k, \alpha}(M) = \Gamma^{k, \alpha} (T^*M \otimes T^*M)$, for a metric $\overline{g} \in \mathcal{M}^{k, \alpha}(M)$.

Denote by $\mathcal{C}_{+}^{k, \alpha}(M)$ the open subset of the Banach space $\mathcal{C}^{k, \alpha}(M)$ consisting of positive functions. Now, we define the $\mathcal{C}^{k, \alpha}$-conformal class of $\overline{g}$ by
$$[\overline{g}] \colon = \{ \phi \overline{g} : \phi \in \mathcal{C}^{k, \alpha}_{+}(M) \},$$
which is an embedded submanifold of $\mathcal{M}^{k, \alpha}(M)$. In particular, $[\overline{g}]$ is a Banach manifold with differential structure induced by $\mathcal{C}^{k, \alpha}(M)$ and its tangent space, $T_{\overline{g}}[\overline{g}] = \{\psi \overline{g} : \psi \in \mathcal{C}^{k, \alpha}(M) \},$ can be identified with $\mathcal{C}^{k, \alpha}(M)$.

Let $\omega_{\overline{g}}$ be the volume form on $M$, with respect to $\overline{g}$, and $\sigma_{\overline{g}}$ the volume form induced on $\partial M$. The volume function defined by
$$\mathcal{V}(\overline{g}) \colon = \displaystyle\int_{M} \ \omega_{\overline{g}}$$
is smooth on $\mathcal{M}^{k, \alpha}(M)$ and its differential is given by 
\begin{equation}\label{eq:derVol}
	(d\mathcal{V})_{\overline{g}}(h) = \displaystyle\frac{1}{2} \displaystyle\int_{M} \mbox{tr}_{\overline{g}}(h) \ \omega_{\overline{g}},
\end{equation}
for each $\overline{g} \in \mathcal{M}^{k, \alpha}(M)$ and $h \in T_{\overline{g}}\mathcal{M}^{k, \alpha}(M)$.

The subset of unitary volume metrics in $\mathcal{M}^{k, \alpha}(M)$,
$$\mathcal{M}^{k, \alpha}(M)_{1} \colon = \big\{ \overline{g} \in \mathcal{M}^{k, \alpha}(M) : \mathcal{V}(\overline{g}) = 1 \big\},$$
is a smooth embedded submanifold of $\mathcal{M}^{k, \alpha}(M)$ (see \cite{Diaz}). Observe that if $\overline{g} \in \mathcal{M}^{k, \alpha}(M)_{1}$, the conformal metric $\phi \overline{g}$, for some $\phi \in \mathcal{C}_{+}^{k, \alpha}(M)$, is not in $\mathcal{M}^{k, \alpha}(M)_{1}$, in general. Indeed,
$$\mathcal{V}(\phi \overline{g}) = \displaystyle\int_{M} \omega_{\phi \overline{g}} = \displaystyle\int_{M} \phi^{\frac{m}{2}} \ \omega_{\overline{g}}.$$
To avoid homotheties, for each $\overline{g} \in \mathcal{M}^{k, \alpha}(M)_{1}$, we define
$$[\overline{g}]_{1} = \left\{ \phi \overline{g} : \phi \in \mathcal{C}_{+}^{k, \alpha}(M), \displaystyle\int_{M} \phi^{\frac{m}{2}} \ \omega_{\overline{g}} = 1 \right\},$$
which is an embedded submanifold of $[\overline{g}]$ and an embedded submanifold of $\mathcal{M}^{k, \alpha}(M)_{1}$ (see \cite{Uniqueness}), with tangent space
$$T_{\overline{g}}[\overline{g}]_{1} = \left\{ \psi \overline{g} : \psi \in \mathcal{C}^{k, \alpha}(M), \ \displaystyle\int_{M} \psi \ \omega_{\overline{g}} = 0 \right\}.$$
Note that $[\overline{g}]_{1} = \mathcal{M}^{k, \alpha}(M)_{1} \cap [\overline{g}]$; in fact, $\mathcal{M}^{k, \alpha}(M)_{1}$ is transverse to $[\overline{g}]$, so $[\overline{g}]_{1}$ is an embedded submanifold of  $\mathcal{M}^{k, \alpha}(M)$ (see \cite{Uniqueness} for details).

Now, we define the $\mathcal{C}^{k, \alpha}$-normalized conformal class, the manifold that is specially interesting for our problem
$$[\overline{g}]^{0} = \{\tilde{g} \in [\overline{g}] : H_{\tilde{g}} = 0 \},$$
where, $H_{\tilde{g}}$ is the mean curvature of the boundary with respect to the metric $\tilde{g}$.
By a result of Escobar \cite{EscobarII} \textit{there is at least one metric with vanishing mean curvature in each conformal class}, so given a conformal class $[\overline{g}]$ we can assume that $H_{\overline{g}} = 0$.

\begin{proposition}
	The $\mathcal{C}^{k, \alpha}$-normalized conformal class of $\overline{g}$ can be identified with
	$$\mathcal{C}^{k, \alpha}_{+}(M)^{0} = \left\{\phi \in \mathcal{C}^{k, \alpha}_{+}(M) : \partial_{\eta_{\overline{g}}} \phi = 0 \mbox{ on } \partial M \right\},$$
	which is a closed subset of $\mathcal{C}^{k,\alpha}(M)$.
\end{proposition}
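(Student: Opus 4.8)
The plan is to exploit the conformal transformation law for the mean curvature of the boundary together with the normalization $H_{\overline{g}}=0$ already adopted in the setup. Recall that $[\overline{g}]$ has been identified with $\mathcal{C}^{k,\alpha}_{+}(M)$ via $\phi\overline{g}\leftrightarrow\phi$; under this identification I would transport the defining condition $H_{\tilde g}=0$ of $[\overline{g}]^{0}$ into a condition on the conformal factor $\phi$.

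First I would write a conformal metric in the Yamabe form $\tilde g = u^{\frac{4}{m-2}}\overline{g}$, so that $\phi = u^{\frac{4}{m-2}}$, and invoke the transformation rule for the mean curvature of $\partial M$, namely
$$(m-1)\,\partial_{\eta_{\overline{g}}} u + \tfrac{m-2}{2}H_{\overline{g}}\,u = \tfrac{m-2}{2}H_{\tilde g}\,u^{\frac{m}{m-2}}\quad\text{on }\partial M,$$
which is precisely the law underlying the Neumann boundary equation of \eqref{eq:EDP} (the latter being its specialization to $H_{\tilde g}=0$). Since the setup assumes $H_{\overline{g}}=0$, this collapses to $(m-1)\,\partial_{\eta_{\overline{g}}} u = \tfrac{m-2}{2}H_{\tilde g}\,u^{\frac{m}{m-2}}$, and because $u>0$ this shows $H_{\tilde g}=0$ if and only if $\partial_{\eta_{\overline{g}}} u = 0$.

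Next I would transfer the Neumann condition from $u$ to $\phi$: as $\phi = u^{\frac{4}{m-2}}$ with $u>0$, the chain rule gives $\partial_{\eta_{\overline{g}}}\phi = \tfrac{4}{m-2}\,u^{\frac{4}{m-2}-1}\,\partial_{\eta_{\overline{g}}} u$, and the positivity of $u$ forces $\partial_{\eta_{\overline{g}}}\phi = 0 \iff \partial_{\eta_{\overline{g}}} u = 0$. Combining the two equivalences, $\phi\overline{g}\in[\overline{g}]^{0}$ if and only if $\partial_{\eta_{\overline{g}}}\phi = 0$ on $\partial M$, which is exactly membership in $\mathcal{C}^{k,\alpha}_{+}(M)^{0}$; this furnishes the desired identification.

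Finally, for the closedness claim I would observe that the normal-derivative trace $\phi\mapsto \partial_{\eta_{\overline{g}}}\phi\big|_{\partial M}$ is a bounded linear operator $\mathcal{C}^{k,\alpha}(M)\to\mathcal{C}^{k-1,\alpha}(\partial M)$, so its kernel is a closed linear subspace of $\mathcal{C}^{k,\alpha}(M)$; the set $\mathcal{C}^{k,\alpha}_{+}(M)^{0}$ is the intersection of this kernel with the open cone $\mathcal{C}^{k,\alpha}_{+}(M)$, and hence is closed in the relative topology inherited from $\mathcal{C}^{k,\alpha}_{+}(M)$. The only genuine obstacle is pinning down the mean-curvature transformation formula with the correct constants; once the normalization $H_{\overline{g}}=0$ is used to linearize it, the remaining steps are routine bookkeeping via the chain rule and the continuity of the trace operator.
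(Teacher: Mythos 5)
Your proposal is correct and follows essentially the same route as the paper: the identification comes from the conformal transformation law for the boundary mean curvature, which under the normalization $H_{\overline{g}}=0$ reduces $H_{\tilde g}=0$ to the Neumann condition on the conformal factor, and closedness comes from continuity of the normal-derivative trace (the paper runs an explicit sequence argument where you invoke the closed kernel of a bounded linear map — the same idea). Your extra change of variable $\phi=u^{4/(m-2)}$ is cosmetic, and your remark that the set is really closed only in the relative topology of $\mathcal{C}^{k,\alpha}_{+}(M)$ is, if anything, a slightly more careful reading of the claim than the paper's own.
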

\begin{proof}
	Consider $\tilde{g} = \phi^{\frac{4}{m-2}} \overline{g} \in [\overline{g}]^{0}$. The mean curvature with respect to $\tilde{g}$ is given by 
	$$H_{\tilde{g}} = \phi^{-\frac{m}{m-2}} \left( H_{\overline{g}} \phi + \displaystyle\frac{2(m-1)}{(m-2)} \partial_{\eta_{\overline{g}}} \phi \right);$$
	as $H_{\overline{g}} = 0$ and $\phi > 0$, it follows that $H_{\tilde{g}} = 0$ if and only if $\partial_{\eta_{\overline{g}}} \phi = 0$.
	Now, if $(\phi_n)_n$ is a sequence of functions in $\mathcal{C}^{k, \alpha}_{+}(M)^{0}$ converging to a function $\phi$ with respect to the norm $\mathcal{C}^{k, \alpha}$, then for all $n$ greater than a certain $n_0$, we have
	\begin{align*}
	\| \phi_n - \phi \|_{\mathcal{C}^{k, \alpha}} < \varepsilon \ \
	&\Rightarrow \ \  \| \phi_n - \phi \|_{\mathcal{C}^k} < \varepsilon \ \ \Rightarrow \ \ \max_{0 \leq |a| \leq k} \left[ \max_{p \in M} \left| D^{a}  (\phi_n - \phi) (p) \right| \right] < \varepsilon \\
	& \Rightarrow \ \ \max_{p \in M} \left| \partial_{\eta_{\overline{g}}}\phi_n(p) - \partial_{\eta_{\overline{g}}} \phi(p) \right| < \varepsilon \ \ \Rightarrow \ \  \| \partial_{\eta_{\overline{g}}}\phi_n - \partial_{\eta_{\overline{g}}} \phi  \|_{\mathcal{C}^0} < \varepsilon,
	\end{align*}
	as $\partial_{\eta_{\overline{g}}}\phi_n = 0$ for all $n$, it follows that $\partial_{\eta_{\overline{g}}} \phi = 0$, so $\phi \in \mathcal{C}^{k, \alpha}_{+}(M)^{0}$, which concludes the proof.
\end{proof}

To prove the next proposition we need the following Lemma which is proved in \cite[Proposition~2.5]{Diaz} and is an elementary version of a more general result that can be found in \cite[Theorem~4, Ch.\ IV]{Stein}.

\begin{lemma}\label{lemma}
	There exists a continuous linear map
	$$\mathcal{F} : \mathcal{C}^{k,\alpha}(\partial M) \longrightarrow \mathcal{C}^{k+1,\alpha}(M)$$
	such that, for $\xi \in \mathcal{C}^{k,\alpha}(\partial M)$ the following properties are satisfied:
	\begin{enumerate}
		\item [(a.)] $\mathcal{F}(\xi)$ vanishes on $\partial M$;
		\item [(b.)] $\partial_{\eta} \mathcal{F}(\xi) = \xi$.
	\end{enumerate}
\end{lemma}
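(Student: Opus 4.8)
The plan is to build $\mathcal{F}$ by hand, extending the boundary datum into a collar of $\partial M$ via a tangential mollification whose scale shrinks with the distance to the boundary; this is the elementary, half-space form of Stein's extension theorem. \emph{Set-up.} Using the normal exponential map of $\overline{g}$, fix $\delta>0$ and a diffeomorphism of $\partial M\times[0,\delta)$ onto a collar of $\partial M$, in which a point is written $(y,t)$, with $t$ the $\overline{g}$-distance to the boundary and the coordinate chosen so that $\partial_t|_{t=0}=\eta$. Choose a cutoff $\chi\in\mathcal{C}^\infty([0,\infty))$ with $\chi\equiv1$ near $0$ and $\operatorname{supp}\chi\subset[0,\delta/2)$, a partition of unity $\{\psi_j\}$ on $\partial M$ subordinate to finitely many boundary charts $y\in\mathbb{R}^{m-1}$, and a mollifier $\rho\in\mathcal{C}^\infty_c(\mathbb{R}^{m-1})$ with $\int\rho=1$. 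Writing $\rho_s(z)=s^{-(m-1)}\rho(z/s)$, I define, on the collar,
\[
(\mathcal{F}\xi)(y,t)=\chi(t)\sum_j\int_0^t\big(\rho_s*(\psi_j\xi)\big)(y)\,ds,
\]
and extend by $0$ for $t\geq\delta/2$; since $\psi_j\xi$ has compact support in a chart and $s\leq\delta/2$, each convolution is well defined, the sum is finite, and $\xi\mapsto\mathcal{F}\xi$ is plainly linear.

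\emph{Boundary conditions.} Property (a) is immediate, as $\int_0^0=0$ forces $(\mathcal{F}\xi)(y,0)=0$. For (b) I differentiate, using $\rho_s*g\to g$ uniformly as $s\to0^+$ for continuous $g$:
\[
\partial_t(\mathcal{F}\xi)=\chi'(t)\sum_j\int_0^t\rho_s*(\psi_j\xi)\,ds+\chi(t)\sum_j\rho_t*(\psi_j\xi),
\]
so that at $t=0$ the first sum vanishes and the second converges to $\chi(0)\sum_j\psi_j\xi=\xi$, i.e. $\partial_\eta\mathcal{F}(\xi)=\xi$.

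\emph{Regularity and continuity (the crux).} It remains to prove $\mathcal{F}\xi\in\mathcal{C}^{k+1,\alpha}(M)$ together with $\|\mathcal{F}\xi\|_{\mathcal{C}^{k+1,\alpha}}\leq C\|\xi\|_{\mathcal{C}^{k,\alpha}}$, which \emph{is} the asserted continuity; here the operator must gain a full derivative over the datum. Fixing a chart and writing $f(s,y)=\rho_s*(\psi_j\xi)(y)$ and $G(t,y)=\int_0^t f\,ds$, I would estimate an arbitrary derivative of $\chi G$ of total order $\leq k+1$. Any derivative purely in $y$, up to order $k$, passes through the convolution, $\partial_y^a(\rho_s*\xi)=\rho_s*\partial_y^a\xi$ with $\partial_y^a\xi\in\mathcal{C}^{0,\alpha}$; each pure $t$-derivative lowers the antiderivative, $\partial_t^nG=\partial_t^{n-1}f$, and is controlled by the scaling identity $\partial_s\rho_s=-s^{-1}\operatorname{div}_z(z\,\rho_s)$, which transfers a $t$-differentiation to a factor $s^{-1}$ that is absorbed either by moving one derivative onto $\xi$ (when $\xi$ still has derivatives to spare) or, at top order, by the vanishing integral $\int\partial^\gamma\rho_s=0$ of the differentiated kernel. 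The single derivative in excess of the $k$ that $\xi$ carries always meets such a kernel, producing $\|\partial^\gamma\rho_s\|_{L^1}=s^{-|\gamma|}\|\partial^\gamma\rho\|_{L^1}$; replacing $\partial^a\xi$ by its increment via the vanishing moment converts this $s^{-1}$ into $s^{\alpha-1}[\partial^a\xi]_\alpha$, which is integrable in $s$ over $[0,t]$ (for the extra tangential derivative) and, since $t^\alpha-t'^\alpha\leq|t-t'|^\alpha$, yields exactly the $\alpha$-Hölder modulus in $t$ up to $\{t=0\}$ (for the extra transverse derivative). I expect this estimate — extracting a uniform $\mathcal{C}^{0,\alpha}$ bound for the $(k+1)$-st derivatives up to the boundary out of a merely $\mathcal{C}^{k,\alpha}$ datum — to be the main obstacle; it is the whole content of the gain-of-one-derivative phenomenon and is precisely where the Hölder continuity, rather than plain boundedness, of the top derivatives of $\xi$ is indispensable. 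Finally I would sum over $j$, transfer the Euclidean estimates back to $M$ through the finitely many charts, and note that replacing the coordinate field $\partial_t$ by the geometric normal $\eta$ affects $\mathcal{F}$ only by $\mathcal{C}^{k,\alpha}$-bounded lower-order terms, leaving (a) and (b) untouched.
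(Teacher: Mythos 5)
Your proposal is correct in substance, but note how it sits relative to the paper: the paper does not actually prove this lemma in-house --- it disposes of it by citing \cite[Proposition~2.5]{Diaz} and remarking that it is an elementary version of Stein's extension theorem \cite{Stein} --- so what you have written is precisely the elementary collar-and-mollification construction that those references encapsulate, reconstructed from scratch rather than a genuinely different route. Your formula $(\mathcal{F}\xi)(y,t)=\chi(t)\sum_j\int_0^t\bigl(\rho_s*(\psi_j\xi)\bigr)(y)\,ds$ is the standard half-space argument; properties (a.) and (b.) are verified correctly, and your closing worry about replacing the coordinate field $\partial_t$ by the geometric normal is moot, since in your Fermi coordinates $\partial_t|_{t=0}=\eta$ exactly and (b.) is evaluated only on $\partial M$. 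You have also isolated the correct engine for the gain of one derivative: the scaling identity $\partial_s\rho_s=-s^{-1}\operatorname{div}_z(z\,\rho_s)$, integration by parts transferring a $z$-derivative onto $\xi$ while $\xi$ still has derivatives to spare, and, at top order, the vanishing moments of the differentiated kernels converting $s^{-1}$ into the integrable weight $s^{\alpha-1}[\partial^a\xi]_{\alpha}$, with $t^{\alpha}-t'^{\alpha}\le|t-t'|^{\alpha}$ producing the transverse H\"older modulus. The one case your sketch does not address, and which is not covered by the mean-zero trick alone, is the \emph{tangential} H\"older modulus of the $(k+1)$-st order derivatives: there one must split the $s$-integral at $s=|y-y'|$, estimating each point separately by $s^{\alpha-1}[\partial^a\xi]_{\alpha}$ for $s\le|y-y'|$, and using the second-derivative kernel bound $\bigl|(\nabla\partial^{\gamma}\rho_s)*\partial^a\xi\bigr|\lesssim s^{\alpha-2}[\partial^a\xi]_{\alpha}$ together with the mean value theorem for $s\ge|y-y'|$; both pieces then sum to $C|y-y'|^{\alpha}$. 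That step, the analogous joint moduli for the mixed derivatives $\partial_y^a\partial_t^n G=\partial_t^{n-1}(\rho_t*\partial_y^a(\psi_j\xi))$, and the routine transfer of the Euclidean bounds through the finitely many fixed charts are what remain to be executed; since all the ideas are present and no step fails, your sketch is a correct outline of the proof that the paper delegates to its references.
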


\begin{proposition}
	The $\mathcal{C}^{k, \alpha}$-normalized conformal class, $[\overline{g}]^{0}$, is an embedded submanifold of $[\overline{g}]$.
\end{proposition}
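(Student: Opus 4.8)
The plan is to reduce the claim to a statement about a closed linear subspace of a Banach space and then to invoke Lemma~\ref{lemma} to show that this subspace splits. By the previous Proposition, under the standard identification of $[\overline{g}]$ with the open set $\mathcal{C}^{k,\alpha}_{+}(M)\subset\mathcal{C}^{k,\alpha}(M)$ (via $\phi\overline{g}\leftrightarrow\phi$), the normalized conformal class $[\overline{g}]^{0}$ corresponds precisely to
$$\mathcal{C}^{k,\alpha}_{+}(M)^{0}=\left\{\phi\in\mathcal{C}^{k,\alpha}_{+}(M):\partial_{\eta_{\overline{g}}}\phi=0\text{ on }\partial M\right\},$$
a set cut out by the \emph{linear} condition $\partial_{\eta_{\overline{g}}}\phi=0$ (the chain-rule factor relating the two conformal parametrizations is everywhere positive, so the condition stays linear in any of them). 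Hence it suffices to show that $\mathcal{C}^{k,\alpha}_{+}(M)^{0}$ is an embedded submanifold of $\mathcal{C}^{k,\alpha}_{+}(M)$.

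First I would introduce the bounded linear normal-derivative trace operator $T:\mathcal{C}^{k,\alpha}(M)\to\mathcal{C}^{k-1,\alpha}(\partial M)$, $T(\phi)=\partial_{\eta_{\overline{g}}}\phi|_{\partial M}$, so that $\mathcal{C}^{k,\alpha}_{+}(M)^{0}=\mathcal{C}^{k,\alpha}_{+}(M)\cap W$, where $W=\ker T$ is a closed linear subspace. Since the intersection of an embedded submanifold with an open set is again an embedded submanifold, the whole problem reduces to proving that $W$ is an embedded submanifold of $\mathcal{C}^{k,\alpha}(M)$; for a closed linear subspace of a Banach space this is equivalent to showing that $W$ is \emph{complemented}, i.e.\ that it is the range of a bounded linear projection.

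This is exactly where Lemma~\ref{lemma} enters. Applying it with $k-1$ in place of $k$ yields a continuous linear right inverse $\mathcal{F}:\mathcal{C}^{k-1,\alpha}(\partial M)\to\mathcal{C}^{k,\alpha}(M)$ of $T$: properties (a.) and (b.) give $\mathcal{F}(\xi)|_{\partial M}=0$ and $\partial_{\eta_{\overline{g}}}\mathcal{F}(\xi)=\xi$, whence $T\circ\mathcal{F}=\mathrm{id}$. I would then set $P=\mathrm{id}-\mathcal{F}\circ T:\mathcal{C}^{k,\alpha}(M)\to\mathcal{C}^{k,\alpha}(M)$ and verify the routine identities: $P$ is bounded and linear; $T\circ P = T-\left(T\circ\mathcal{F}\right)\circ T = 0$, so $\operatorname{Im}P\subseteq W$; and $P|_{W}=\mathrm{id}$, since $T\phi=0$ forces $P\phi=\phi$. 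Together these give $P^{2}=P$ and $\operatorname{Im}P=W$, so $P$ is a bounded projection onto $W$ and the splitting $\mathcal{C}^{k,\alpha}(M)=W\oplus\mathcal{F}\big(\mathcal{C}^{k-1,\alpha}(\partial M)\big)$ holds. Thus $W$, and hence $\mathcal{C}^{k,\alpha}_{+}(M)^{0}$, is an embedded submanifold, and transporting this back through the identification of the first paragraph proves that $[\overline{g}]^{0}$ is an embedded submanifold of $[\overline{g}]$.

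I expect the only genuine difficulty to be the existence of the bounded right inverse $\mathcal{F}$ for the normal-derivative trace, but this is precisely the content of Lemma~\ref{lemma} (an extension/lifting operator), so everything else is bookkeeping. The single point requiring care is the regularity matching: the trace operator $T$ loses one derivative while $\mathcal{F}$ gains one, and choosing the index in Lemma~\ref{lemma} to be $k-1$ is exactly what makes $T\circ\mathcal{F}$ the identity on $\mathcal{C}^{k-1,\alpha}(\partial M)$ while keeping $P$ a well-defined operator on $\mathcal{C}^{k,\alpha}(M)$.
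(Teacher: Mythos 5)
Your proof is correct and follows essentially the same route as the paper's: both reduce the claim to the existence of a bounded right inverse for the normal-derivative trace, supplied by Lemma~\ref{lemma}, which makes the kernel a complemented closed subspace of $\mathcal{C}^{k,\alpha}(M)$. The only difference is presentational --- the paper phrases this as a regular-value/submersion argument for the map $\mathcal{N}_{\overline{g}}(\phi\overline{g}) = \partial_{\eta_{\overline{g}}}\phi$, whereas you exploit the linearity of the constraint directly and write out the projection $P = \mathrm{id} - \mathcal{F}\circ T$ explicitly.
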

\begin{proof}
Given $\overline{g} \in \mathcal{M}^{k, \alpha}(M)$, let $\eta_{\overline{g}}$ be the unitary (inward) vector field normal to the boundary $\partial M$. Define
$$
\begin{array}{rcl}
\mathcal{N}_{\overline{g}} : [\overline{g}] & \longrightarrow & \mathcal{C}^{k-1, \alpha}(\partial M) \\
\phi \overline{g} & \longmapsto & \partial_{{\eta}_{\overline{g}}} \phi.
\end{array}
$$
So, $\mathcal{N}_{\overline{g}}^{-1}\left( \left\{ 0 \right\} \right) = [\overline{g}]^{0}$ and the differential $(\mbox{d}\mathcal{N}_{\overline{g}})_{\phi \overline{g}} : \mathcal{C}^{k, \alpha}(M) \longrightarrow \mathcal{C}^{k-1, \alpha}(\partial M)$ is given by $(\mbox{d}\mathcal{N}_{\overline{g}})_{\phi \overline{g}} (\psi) = \partial_{\eta_{\overline{g}}} \psi$, for all $\phi \overline{g} \in [\overline{g}]$ and $\psi \in \mathcal{C}^{k, \alpha}(M)$. Now, by Lemma \ref{lemma}, $\left( \mbox{d} \mathcal{N}_{\overline{g}} \right)_{\phi \overline{g}}$ admits a bounded right-inverse, for all $\phi\overline{g}\in [\overline{g}]$. Therefore, the differential is surjective and its kernel,
$$\mbox{ker } (\mbox{d}\mathcal{N}_{\overline{g}})_{\phi \overline{g}} = \left\{ \psi \in \mathcal{C}^{k, \alpha}(M) : \partial_{\eta_{\overline{g}}} \psi = 0 \right\},$$
has a closed complement in $\mathcal{C}^{k, \alpha}(M)$. It follows that $[\overline{g}]^{0}$ is an embedded submanifold of $[\overline{g}]$.
\end{proof}

Finally, we define the $\mathcal{C}^{k, \alpha}$-normalized conformal class consisting of metrics of volume one
$$[\overline{g}]^{0}_{1} = \left\{ \phi \overline{g} : \phi \in \mathcal{C}_{+}^{k, \alpha}(M), \ \partial_{\eta_{\overline{g}}} \phi = 0, \ \displaystyle\int_{M} \phi^{\frac{m}{2}} \ \omega_{\overline{g}} = 1 \right\},$$
that is an embedded submanifold of $\mathcal{M}^{k, \alpha}(M)_{1}$ and an embedded submanifold of $[\overline{g}]$. This conformal class can be expressed as $[\overline{g}]^{0}_{1} = [\overline{g}]^{0} \cap \mathcal{M}^{k, \alpha}(M)_{1}$, and the correspondent tangent space is identified with
$$T_{\overline{g}}[\overline{g}]^{0}_{1} = \left\{ \psi \in \mathcal{C}^{k, \alpha}(M)^{0} : \displaystyle\int_{M} \psi \ \omega_{\overline{g}} = 0 \right\}.$$

\subsection{The Hilbert-Einstein Functional}\label{sec:HEfunc}

The Hilbert-Einstein functional compose the variational model for the Yamabe problem. In this section we present some well-known facts about the Hilbert-Einstein functionl that are proved in \cite{Diaz}.

The Hilbert-Einstein functional $F: \mathcal{M}^{k, \alpha}(M) \longrightarrow \mathbb{R}$ given by
\begin{equation} \label{functional}
F(\overline{g}) = \displaystyle\int_{M}{R_{\overline{g}} \ \omega_{\overline{g}}},
\end{equation}
is a smooth functional in $\mathcal{M}^{k, \alpha}(M)$ and in $[\overline{g}]$. The first variation of $F$ is given by 
$$\delta F(\overline{g})h = - \displaystyle\int_{M} \left\langle Ric_{\overline{g}} - \displaystyle\frac{1}{2}R_{\overline{g}} {\overline{g}},h\right\rangle_{\overline{g}} \omega_{\overline{g}} - 2 \displaystyle\int_{\partial M} \left( \delta H_{\overline{g}} + \frac{1}{2} \left\langle I\!I_{\overline{g}},h \right\rangle\right) \sigma_{\overline{g}},$$
for all $h \in \Gamma^{k, \alpha}(T^*M \otimes T^*M)$.

For compact Riemannian manifolds (without boundary), we know that the critical points of $F$, restricted to $\mathcal{M}^k(M)_{1}$, are the Einstein metrics of volume $1$ in $M$ and, if $F$ is restricted to the $\mathcal{C}^{k, \alpha}$-conformal classe of volume $1$, $[\overline{g}]_{1}$, then the critical points are those metrics conformal to $\overline{g}$, which have unitary volume and constant scalar curvature. 

We are interested in the critical points of $F$ restricted to the $\mathcal{C}^{k, \alpha}$-normalized conformal class of volume $1$, denoted by $[\overline{g}]^0_1$, once we are dealing with manifolds with boundary. The critical points of $F$ in $[\overline{g}]^{0}_{1}$ are metrics which are conformal to $\overline{g}$, have unitary volume, constant scalar curvature and vanishing mean curvature.

If $\overline{g} \in \mathcal{M}^{k, \alpha}(M)_{1}$ is a critical point of $F$ in $[\overline{g}]^{0}_{1}$, then (see \cite{Koiso}) the second variation of $F$ is given by the quadratic form
$$\delta^2 {F}({\overline{g}})(\psi)  = \displaystyle\frac{(m-2)}{2} \displaystyle\int_{M} \left( (m-1) \Delta_{\overline{g}} \psi - R_{\overline{g}} \psi \right) \psi \ \omega_{\overline{g}},
$$
where $\psi \in \mathcal{C}^{k, \alpha}(M)^0$ and has null integral, and $\overline{g}$ is non-degenerate if $R_{\overline{g}} = 0$ or $\frac{R_{\overline{g}}}{m-1}$ is not an eigenvalue of $\Delta_{\overline{g}}$ with Neumann boundary condition.

Indeed, note that $(m-1) \Delta_{g} - R_{g}$, as an operator from $\mathcal{C}^{k,\alpha}(M)$ in $\mathcal{C}^{k-2, \alpha}$, is Fredholm of index zero, and is precisely for this reason that we are considering these Hölder spaces. Note that this operator maps the subespace
$$\left\{ \psi \in \mathcal{C}^{k, \alpha}(M) : \partial_{\eta_{\overline{g}}} \psi = 0, \displaystyle\int_{M} \psi = 0 \right\}$$
into the subespace of $\mathcal{C}^{k-2, \alpha}$ which consists of functions with null integral. It follows that
$$(m-1) \Delta_{\overline{g}} - R_{\overline{g}} :  T_{\overline{g}}[\overline{g}]_{1} \longrightarrow \mathcal{C}^{k-2, \alpha}(M)$$
is Fredholm of index zero. So, the quadratic form $\delta^2 F (\overline{g})(\psi,\psi)$ is non-degenerate if and only if $\ker  \left( (m-1) \Delta_{g} - R_{g} \right) = \{0\}$, that is, if and only if $\frac{R_{\overline{g}}}{m-1} = 0$ or $\frac{R_{\overline{g}}}{m-1}$ is not a solution for the Neumann problem
$$
\left\{
\begin{array}{lcl}
\Delta_{\overline{g}} \psi & = & \rho \psi\\
\partial_{\eta_{\overline{g}}} \psi & = & 0
\end{array}.
\right.
$$

\subsection{Isometric Actions and Representations}\label{sec:iso_rep}

Let $(M,g)$ be a compact Riemannian manifold (with or without boundary). In the space of $\mathcal{C}^k(M)$-functions, with $k \geq 2$, we define the Laplacian operator of $M$ with respect to the Riemannian metric $g$ by
$$\Delta_g = - \mbox{div}_g \nabla_g.$$
The following eigenvalue problems are classics in compact manifolds:
\begin{itemize}
	\item[(1)] Closed problem: $\partial M = \emptyset$,
	$$\Delta f = \rho f, \mbox{ em } M,$$
	\item[(2)] Dirichlet problem: $\partial M \neq \emptyset$,
	$$
	\left\{
	\begin{array}{rcl}
	\Delta f & = & \rho f, \mbox{ em } M, \\
	f & = & 0, \mbox{ em } \partial M
	\end{array}
	\right.
	$$
	\item[(3)] Neumann problem: $\partial M \neq \emptyset$,
	$$
	\left\{
	\begin{array}{rcl}
	\Delta f & = & \rho f, \mbox{ em } M, \\
	\partial_{\eta} f & = & 0, \mbox{ em } \partial M
	\end{array}
	\right.
	$$
	where $\eta$ is the unitary field normal to the boundary.
\end{itemize}
The Laplacian operator $\Delta_g$ is symmetric with respect to the inner product $L^2(M)$ in the space $\mathcal{C}^k(M)$, $k \geq 2$, for any of the boundary conditions given in the above problems.  It is well-known (\cite{Berard}, p. 53) that, for each of these eigenvalue problems, the Laplacian has an infinity sequence of non-negative eigenvalues $\rho$, of finite geometric multiplicity
$$0 \leq \rho_0 < \rho_1 < \ldots < \rho_k < \ldots,$$
that is, the eigenspace $V_{\rho}$ of $\Delta_g$ associated to the eigenvalue $\rho$, solution of one of the previous problems, has finite dimension.

Consider, now, a Lie group $G$, acting by isometries on $(M,g)$:
$$\mu : G \times M \longrightarrow M.$$
The next result ensures the invariance of the eigenspaces of the Laplacian of $M$ by the action of $G$.

\begin{proposition} \label{prop:autesp}
	If $G$ is a Lie group, acting by isometries on $(M,g)$ and $V_{\rho}$ is an eigenspace of the Laplacian $\Delta_g$ associated to the eigenvalue $\rho$, solution of any of the eigenvalue problems above, then $V_{\rho}$ is invariant by the action of $G$. 
\end{proposition}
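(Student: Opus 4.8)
The plan is to show that the isometric action of $G$ preserves the eigenvalue equation together with its boundary condition, so that it maps each eigenspace into itself. The key observation is that for each $a \in G$ the map $\mu_a : M \to M$, $p \mapsto \mu(a,p)$, is an isometry, and therefore commutes with the Laplacian. Concretely, I would define the natural (anti-)representation on functions by $(a \cdot f) := f \circ \mu_a$ (pullback by the isometry), and verify the intertwining relation $\Delta_g(f \circ \mu_a) = (\Delta_g f) \circ \mu_a$. This is the heart of the argument, and it follows from the fact that the Laplace--Beltrami operator is a metric invariant: since $\mu_a^* g = g$, naturality of $\mathrm{div}_g$ and $\nabla_g$ under isometries gives $\mathrm{div}_g \nabla_g (f \circ \mu_a) = (\mathrm{div}_g \nabla_g f)\circ \mu_a$, whence $\Delta_g(a\cdot f) = a\cdot(\Delta_g f)$.

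Granting the intertwining identity, the invariance of $V_\rho$ is immediate: if $f \in V_\rho$, so that $\Delta_g f = \rho f$, then $\Delta_g(f\circ \mu_a) = (\Delta_g f)\circ \mu_a = (\rho f)\circ \mu_a = \rho\,(f\circ \mu_a)$, so $f \circ \mu_a$ is again a $\rho$-eigenfunction. Thus $a \cdot V_\rho \subseteq V_\rho$ for every $a \in G$, which is exactly $G$-invariance. Since this works uniformly for all three eigenvalue problems listed above (closed, Dirichlet, Neumann), I would handle those cases in a single step rather than separately.

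The one genuinely substantive point — and the step I expect to need the most care — is checking that the relevant boundary condition is itself preserved by $\mu_a$, so that $f \circ \mu_a$ lies in the correct function space and not merely in the kernel of $\Delta_g - \rho$. For this I would use that an isometry of $(M,g)$ with $\partial M \neq \emptyset$ necessarily maps $\partial M$ to $\partial M$ and carries the unit inward normal field $\eta_g$ to itself in the sense $d\mu_a(\eta_g) = \eta_g \circ \mu_a$, because $\mu_a$ is a diffeomorphism of the pair $(M,\partial M)$ preserving the metric and hence preserving normality and unit length. In the Dirichlet case this gives $(f\circ\mu_a)|_{\partial M} = (f|_{\partial M})\circ(\mu_a|_{\partial M}) = 0$; in the Neumann case the chain rule together with the normal-preserving property yields $\partial_{\eta_g}(f\circ\mu_a) = (\partial_{\eta_g} f)\circ\mu_a = 0$. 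The closed case has no boundary condition to verify. Once the boundary behaviour is settled, the proof closes.

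Throughout, the only real content is the naturality of the Laplacian and of the normal derivative under isometries; everything else is a direct substitution. I would therefore present the intertwining identity and the normal-invariance as the two lemmatic facts, invoke them, and conclude.
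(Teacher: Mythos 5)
Your proposal is correct and follows essentially the same route as the paper: establish that the pullback by each isometry intertwines with $\Delta_g$, deduce that $f\circ\mu_a$ is again a $\rho$-eigenfunction, and then check case by case that the closed, Dirichlet, and Neumann conditions are preserved. If anything, you justify the two key facts (the intertwining identity and the invariance of the unit inward normal) more explicitly than the paper, which simply asserts them.
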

\begin{proof}
	Consider $\xi \in G$ and $f \in V_{\rho} \subset \mathcal{C}^{k}(M)$, $k \geq 2$, then
	\begin{itemize}
		\item[(a.)] $f \circ \xi$ is a function of class $\mathcal{C}^{k}$ in $M$,
		\item [(b.)] $\Delta_{g}(f \circ \xi)(x) = (\Delta_{g} f) (\xi(x)) = \rho f (\xi(x)) =  \rho (f \circ \xi) (x), \forall x \in M$.
	\end{itemize} 
	Further,
	\begin{itemize}
		\item if $\rho$ is solution of the closed problem and $f$ is an eigenfunction associated to $\rho$, it follows from items $(a.)$ and $(b.)$ that $f \circ \xi \in V_{\rho}$;
		\item if $\rho$ is solution of the Dirichlet problem, we have
		$$\left. (f \circ \xi)\right|_{\partial M} = f\!\left.\left(\xi\right) \right|_{\partial M} = 0,$$
		that is, $f \circ \xi \in V_{\rho}$;
		\item if $\rho$ is solution of the Neumann problem, then
		$$\partial_{\eta}(f \circ \xi) = (\partial_{\eta}f)\circ \xi \cdot \partial_{\eta} \xi = 0,$$
		so $f \circ \xi \in V_{\rho}$,
	\end{itemize}
	which concludes the proof.
\end{proof}
Generally, if $F:(M,g) \longrightarrow (N,h)$ is an isometry, then $(M,g)$ and $(N,h)$ have the same spectrum and if $f$ is an eigenfuction for $\Delta_h$, then $f \circ F$ is an eigenfunction for $\Delta_g$ associated to the same eigenvalue.

Now, we see that the isometric action of a Lie group $G$, on a compact manifold $(M,g)$, determines (anti-)representations of $G$ in the eigenspaces $V_{\rho}$ of the Laplacian $\Delta_g$, via pull-back, 
$$f \mapsto \xi^*(f) = f \circ \xi,$$
with $\xi \in G$ e $f \in V_{\rho}$.

\begin{proposition}\label{prop:repres}
	For each eigenvalue $\rho$, solution of one of the mentioned problems, \linebreak $\pi_{\rho} : G \rightarrow GL(V_{\rho}),$
	defined by the linear application 
	$$
	\begin{array}{rclcl}
	\pi_{\rho}(\xi) & : & V_{\rho} & \longrightarrow & V_{\rho} \\
	& & f & \mapsto & f \circ \xi
	\end{array}
	$$
	is a (anti-)representation of $G$ in the eigenspace $V_{\rho}$.
\end{proposition}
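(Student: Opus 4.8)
The plan is to verify the defining properties of a (anti-)representation one at a time, since the only analytically nontrivial point --- that $f\circ\xi$ remains in $V_\rho$ --- has already been secured by Proposition~\ref{prop:autesp}, uniformly in the three boundary conditions (closed, Dirichlet, Neumann). First I would record that $\pi_\rho$ is well defined: for $\xi\in G$ and $f\in V_\rho$, Proposition~\ref{prop:autesp} guarantees $f\circ\xi\in V_\rho$, so $\pi_\rho(\xi)$ genuinely maps $V_\rho$ into itself. Linearity of each $\pi_\rho(\xi)$ is then immediate from the pointwise identities $(f_1+f_2)\circ\xi=f_1\circ\xi+f_2\circ\xi$ and $(\lambda f)\circ\xi=\lambda(f\circ\xi)$, valid for all $f_1,f_2,f\in V_\rho$ and $\lambda\in\mathbb{R}$.

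Next I would establish the composition rule, which is the heart of the statement. Writing $\xi$ also for the isometry $x\mapsto\mu(\xi,x)$ of $M$, the action axiom reads $\xi\eta=\xi\circ\eta$ as maps $M\to M$. Hence, for every $f\in V_\rho$,
$$\pi_\rho(\xi\eta)(f)=f\circ(\xi\circ\eta)=(f\circ\xi)\circ\eta=\pi_\rho(\eta)\big(\pi_\rho(\xi)(f)\big),$$
so that $\pi_\rho(\xi\eta)=\pi_\rho(\eta)\circ\pi_\rho(\xi)$. This is precisely the anti-homomorphism identity, which is why the assignment is naturally an \emph{anti}-representation (it becomes an honest representation after precomposing with the inversion $\xi\mapsto\xi^{-1}$). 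Since $\mu(e,\cdot)$ is the identity of $M$, one has $\pi_\rho(e)=\mathrm{id}_{V_\rho}$, and applying the anti-homomorphism identity to the pairs $(\xi^{-1},\xi)$ and $(\xi,\xi^{-1})$ yields $\pi_\rho(\xi)\circ\pi_\rho(\xi^{-1})=\pi_\rho(\xi^{-1})\circ\pi_\rho(\xi)=\mathrm{id}_{V_\rho}$. Thus each $\pi_\rho(\xi)$ is invertible and $\pi_\rho$ indeed takes values in $GL(V_\rho)$.

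Finally, to qualify $\pi_\rho$ as a representation of the Lie group $G$, I would check continuity (smoothness) of $\xi\mapsto\pi_\rho(\xi)$. Because $V_\rho$ is finite-dimensional --- its finiteness was recorded above from \cite{Berard} --- it suffices to fix an $L^2$-orthonormal basis $f_1,\dots,f_d$ of $V_\rho$ and verify that the matrix entries $\xi\mapsto\langle f_i\circ\xi,\,f_j\rangle_{L^2}$ depend continuously (smoothly) on $\xi$; this follows from smoothness of the action $\mu$ together with differentiation under the integral sign on the compact manifold $M$. I expect no genuine obstacle here: the only real subtlety is the bookkeeping of the anti-homomorphism convention, and the single nontrivial invariance step is already furnished by Proposition~\ref{prop:autesp}.
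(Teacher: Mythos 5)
Your proposal is correct and follows essentially the same route as the paper: invariance of $V_\rho$ is delegated to Proposition~\ref{prop:autesp}, and the key step is the identical computation $\pi_\rho(\xi\zeta)f=f\circ(\xi\circ\zeta)=(f\circ\xi)\circ\zeta=\pi_\rho(\zeta)\circ\pi_\rho(\xi)f$ establishing the anti-homomorphism property. You merely spell out some points the paper leaves implicit (linearity, invertibility, and the smoothness of $\xi\mapsto\pi_\rho(\xi)$ via matrix entries), which is fine but not a different argument.
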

\begin{proof}
	We saw that the eigenspaces of the Laplacian are invariants by the action of $G$, remains to show that $\pi_{\rho}$ is an (anti-)homomorphism of Lie groups. Indeed, $\pi_{\rho}$ is smooth and for all $\xi, \zeta \in G$, $f \in V_{\rho}$, we have
	\begin{align*}
	\pi_{\rho}(\xi \zeta)f 
	& = f \circ (\xi \zeta)  = f \circ (\xi \circ \zeta )  = ( f \circ \xi ) \circ \zeta \\
	& = \pi_{\rho}(\zeta)(f \circ \xi) = \pi_{\rho}(\zeta) \left(\pi_{\rho}(\xi) f \right)  = \pi_{\rho}(\zeta) \circ \pi_{\rho}(\xi)f,
	\end{align*}
	and the result follows.
\end{proof}

\subsection{The Laplace Operator}\label{sec:laplacian}

The Laplacian on a Riemannian Manifold $(M,g)$ is the linear operator
$$\Delta_g : \mathcal{C}^k(M) \longrightarrow \mathcal{C}^{k-2}(M),$$
where $k \geq 2$, defined by
$$\Delta_g f = - \mbox{div}_g \nabla_g f. $$
If $M$ is a compact Riemannian manifold with minimal boundary, the Laplace operator is symmetric with respect to $L^2(M)$-product, in the space of smooth functions defined on $M$, which satisfies the Neumann condition $\partial_{\eta} ( \ \cdot \ ) = 0$. Also the laplacian is symmetric on the space of functions satisfying the Dirichlet condition and on the space of sufficiently regular functions on $M$, when $\partial M = \emptyset$.

Besides that, there are other well-known general facts (\cite{Berard}, pag. 53) about \emph{the spectrum of the laplacian} on a compact manifold, that we exhibited below. 
\begin{theorem}\label{thm:laplaciano}
	Let $(M,g)$ be a compact manifold with boundary $\partial M$, and consider the eigenvalue problem $\Delta_g \phi = \lambda \phi$, $\partial M \neq \emptyset$, with Neumann boundary conditions or with Dirichlet boundary condition. Then,
	\begin{itemize}
		\item[(i.)] the set of eigenvalues consist of an infinity sequence
		$$0 \leq \lambda_0 < \lambda_1 < \ldots < \lambda_k < \ldots,$$
		where $\lambda_0 = 0$ is not an eigenvalue of the Dirichlet problem;
		\item[(ii.)] each eigenvalue has finite multiplicity and the eigenspaces associated to different eigenvalues are $L^2(M)$-orthogonal;
		\item[(iii.)] the direct sum of the eigenspaces $V_{\lambda_i}$ is a dense subset of $L^2(M)$ with the norm $L^2$. Moreover, each eigenfunction is smooth.
	\end{itemize}
\end{theorem}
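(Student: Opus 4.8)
The plan is to recast this as the spectral theorem for a compact self-adjoint operator, which is the standard route. First I would pass to the weak (variational) formulation on the Sobolev space $H^1(M)$. For the Neumann problem I work on all of $H^1(M)$, while for the Dirichlet problem I restrict to the closure $H^1_0(M)$ of compactly supported functions; in either case the relevant symmetric bilinear form is the Dirichlet form $a(u,v) = \int_M \langle \nabla_g u, \nabla_g v \rangle_g \, \omega_g$. A weak solution of $\Delta_g \phi = \lambda \phi$ is a function satisfying $a(\phi, v) = \lambda \int_M \phi v \, \omega_g$ for all admissible test functions $v$, and the Neumann condition $\partial_\eta \phi = 0$ is encoded automatically as a natural boundary condition upon integration by parts, whereas the Dirichlet condition is built into the space $H^1_0(M)$.

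Next I would construct the inverse (Green) operator and show it is compact and self-adjoint. The shifted form $a(u,v) + \int_M uv \, \omega_g$ is bounded and coercive on the relevant space, so by Lax--Milgram the problem $(\Delta_g + 1)\phi = f$ has, for each $f \in L^2(M)$, a unique weak solution $\phi = T f \in H^1$; the resulting operator $T : L^2(M) \to L^2(M)$ is bounded, injective, self-adjoint, and positive. The crucial ingredient here is the Rellich--Kondrachov theorem: since $M$ is compact, the inclusion $H^1(M) \hookrightarrow L^2(M)$ is compact, and composing with it shows that $T$ is a compact operator on $L^2(M)$.

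The heart of the argument is then the Hilbert--Schmidt spectral theorem applied to $T$: a compact, self-adjoint, positive, injective operator on the separable Hilbert space $L^2(M)$ admits an orthonormal basis of eigenfunctions $\{\phi_k\}$ with positive eigenvalues $\mu_k \to 0^+$. Inverting the relation $T\phi_k = \mu_k \phi_k$ gives $\Delta_g \phi_k = (\mu_k^{-1} - 1)\, \phi_k$, so the Laplacian eigenvalues $\lambda_k = \mu_k^{-1} - 1$ form an increasing sequence tending to $+\infty$; this yields (i), while finite multiplicity of each $\lambda_k$ and orthogonality of distinct eigenspaces in (ii), as well as the density in (iii), are immediate from the spectral theorem. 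Non-negativity follows since $\lambda_k \int_M \phi_k^2 \, \omega_g = a(\phi_k,\phi_k) = \int_M |\nabla_g \phi_k|^2 \, \omega_g \ge 0$, and in the Dirichlet case $\lambda = 0$ is excluded because $\int_M |\nabla_g \phi|^2 \, \omega_g = 0$ forces $\phi$ constant, while $\phi|_{\partial M} = 0$ then forces $\phi \equiv 0$.

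Finally, the smoothness claim in (iii) is a separate matter requiring elliptic regularity rather than the functional-analytic package above. The eigenfunctions are a priori only in $H^1$, but each satisfies the elliptic equation $\Delta_g \phi = \lambda \phi$ weakly with right-hand side in $L^2$; interior elliptic regularity bootstraps $\phi$ into every $H^s$ away from $\partial M$, and the corresponding boundary estimates---using that the Neumann (resp. Dirichlet) condition is a complementing boundary condition for $\Delta_g$---give regularity up to $\partial M$. Iterating and invoking Sobolev embedding yields $\phi \in C^\infty(M)$. I expect this boundary elliptic regularity to be the most delicate step, since it is the one place where the precise boundary condition enters in an essential analytic way: the prior steps are the routine compact-operator package, but establishing smoothness up to the boundary requires the full strength of the Agmon--Douglis--Nirenberg theory.
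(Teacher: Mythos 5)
The paper does not prove this statement at all: it is quoted as a classical fact with a citation to B\'erard (p.~53), and your argument is precisely the standard proof given in such references --- weak formulation, Lax--Milgram for the shifted resolvent, Rellich--Kondrachov compactness, the Hilbert--Schmidt spectral theorem, and elliptic (Agmon--Douglis--Nirenberg) regularity for smoothness. Your proof is correct as outlined; the only cosmetic step left implicit is passing from the nondecreasing sequence of eigenvalues counted with multiplicity, which the spectral theorem produces, to the strictly increasing sequence of \emph{distinct} eigenvalues in item (i.) by grouping repeated values into the finite-dimensional eigenspaces $V_{\lambda_i}$.
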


Now, consider the product manifold $M = M_1 \times M_2$, where $(M_1, g^{(1)})$ is closed and $(M_2, \overline{g}^{(2)})$ has non-empty boundary, with the metric $g = g^{(1)} \oplus \overline{g}^{(2)}$. There exist orthonormal basis, $\{f^{(1)}_{\alpha}\}_{\alpha}$ of $L^2(M_1)$ and $\{f^{(2)}_{\beta}\}_{\beta}$ of $L^2(M_2)$, consisting of eigenfunctions of $\Delta_{g^{(1)}}$ and of $\Delta_{\overline{g}^{(2)}}$, respectively. We claim that $\mathcal{B} = \{ f^{(1)}_{\alpha} \otimes f^{(2)}_{\beta} \}_{\alpha, \beta}$ is an orthonormal basis of eigenfunctions of $\Delta_g$ for $L^2(M)$. Indeed, if $f^{(1)}, f^{(2)}$ are eigenfunctions of $\Delta_{g^{(1)}}, \Delta_{\overline{g}^{(2)}}$, associated to the eigenvalues $\rho^{(1)}, \rho^{(2)}$, respectively, then $f^{(1)} \otimes f^{(2)} $ is an eigenfunction of $\Delta_g$ associated to the eigenvalue $\rho^{(1)} + \rho^{(2)}$. Besides that, the set
$$\mathcal{A} = \left\{ \displaystyle\sum_{\alpha, \beta} k_{\alpha, \beta} \left(f^{(1)}_{\alpha} \otimes f^{(2)}_{\beta}\right) , \mbox{ for all } k_{\alpha, \beta} \in \mathbb{R} \right\},$$
of finite linear combinations of functions of the form $f^{(1)}_{\alpha} \otimes f^{(2)}_{\beta}$, is dense in $L^2(M)$ .

As
$$\Delta_g \left(f^{(1)} \otimes f^{(2)} \right) = \left( \Delta_{g^{(1)}} f^{(1)}\right) \otimes f^{(2)}  +  f^{(1)} \otimes  \Delta_{\overline{g}^{(2)}} f^{(2)}, $$
for all $f^{(1)} \in L^2(M_1)$, $f^{(2)} \in L_2(M_2)$, we write
$\Delta_g = \Delta_{g^{(1)}} \otimes I_2  +  I_1 \otimes  \Delta_{\overline{g}^{(2)}},$
where $I_i$ is the identity operator on $L^2(M_i)$, $i=1,2$.

Furthermore, it is important to see that all the eigenvalues of the Laplacian $\Delta_g$ are of the form $\rho^{(1)} + \rho^{(2)}$. In fact, let $f \neq 0$ be an eigenfunction of $\Delta_g$ associated to the eigenvalue $\lambda$. Consider the standard inner product on the space $L^2(M)$, then for each $f_{\alpha}^{(1)} \otimes f_{\beta}^{(2)} \in \mathcal{B}$, we have
$$
\begin{aligned}
\lambda \left\langle f , f^{(1)}_{\alpha} \otimes f^{(2)}_{\beta} \right\rangle
&= \left\langle \lambda f , f^{(1)}_{\alpha} \otimes f^{(2)}_{\beta} \right\rangle \\
&= \left\langle \Delta_g f, f^{(1)}_{\alpha} \otimes f^{(2)}_{\beta} \right\rangle \\
&= \left\langle f, \Delta_g  \left( f^{(1)}_{\alpha} \otimes f^{(2)}_{\beta} \right) \right\rangle \\
&= \left\langle f, \left( \rho_{\alpha}^{(1)} + \rho_{\beta}^{(2)} \right) f^{(1)}_{\alpha} \otimes f^{(2)}_{\beta} \right\rangle \\
&= \left( \rho_{\alpha}^{(1)} + \rho_{\beta}^{(2)} \right) \left\langle f, f^{(1)}_{\alpha} \otimes f^{(2)}_{\beta} \right\rangle \\
\end{aligned}
$$
put $\rho_{\alpha, \beta} = \rho_{\alpha}^{(1)} + \rho_{\beta}^{(2)} $, then
$\lambda - \rho_{\alpha,\beta} \langle f , f^{(1)}_{\alpha} \otimes f^{(2)}_{\beta} \rangle = 0,$
for all $\alpha, \beta$, that is, if we suppose that $\lambda \neq \rho_{\alpha,\beta}$, then $f$ is orthogonal to the space $\mathcal{A}$, but the only element which is orthogonal to a dense subspace is zero, hence $f = 0$, which is a contradiction.

\section{Rigidity and equivariant bifurcation}\label{sec:rigid}

\subsubsection{Local rigidity}

In this section we remember some rigidity results obtained in \cite{Uniqueness} and written in a slightly different way in \cite{Diaz}. We refer to \cite[Proposition~3 and Corollary~4]{Uniqueness} for details.

\begin{definition}
	{\rm Let $\overline{g} \in \mathcal{M}^{k, \alpha}(M)_1$ with constant scalar curvature $R_{\overline{g}}$ in $M$. We say that $\overline{g}$ is \textbf{nondegenerate} if either $R_{\overline{g}} = 0$ or if $\frac{R_{\overline{g}}}{(m-1)}$ is not an eigenvalue of $\Delta_{\overline{g}}$, with the Neumann boundary condition $\partial_{{\eta}_{\overline{g}}} f = 0$. In other words, $\frac{R_{\overline{g}}}{(m-1)}$ is not a solution of the eigenvalue problem
		\begin{equation}
		\left\{
		\begin{array}{rcl}
		\Delta_{\overline{g}} f & = & \lambda f, \mbox{ on $M$} \\
		\partial_{{\eta}_{\overline{g}}} f & = & 0, \mbox{ on $\partial M$}.
		\end{array}
		\right.
		\label{EP}
		\end{equation}}
\end{definition}

\begin{proposition}\label{thm:rigidity}
	Let $\overline{g}_{*} \in \mathcal{M}^{k, \alpha}(M)_{1}$ be a nondegenerate constant scalar curvature metric. Then, there exists an open neighborhood $U$ of $\overline{g}_{*}$ in $\mathcal{M}^{k, \alpha}(M)_{1}$ such that the set
	$$S = \left\{ \overline{g} \in U : R_{\overline{g}} \mbox{ is constant } \right\},$$
	is a smooth embedded submanifold of $\mathcal{M}^{k, \alpha}(M)_{1}$ which is strongly transverse to the $\mathcal{C}^{k, \alpha}$-normalized conformal classes.
\end{proposition}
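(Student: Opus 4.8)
The plan is to realize $S$ as the zero set of a submersion and to read off both the submanifold structure and the transversality from the single analytic fact recorded in Subsection~\ref{sec:HEfunc}: that $(m-1)\Delta_{\overline{g}} - R_{\overline{g}}$ is Fredholm of index zero on the relevant space and, by nondegeneracy, has trivial kernel. First I would use the smooth scalar curvature map $\overline{g}\mapsto R_{\overline{g}}$, which is of class $\mathcal{C}^{\infty}$ from $\mathcal{M}^{k,\alpha}(M)_1$ into $\mathcal{C}^{k-2,\alpha}(M)$, and compose it with the projection onto the quotient $\mathcal{C}^{k-2,\alpha}(M)/\mathbb{R}$ modulo constants. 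Calling the resulting smooth map $\Psi$, one has $S=\Psi^{-1}(0)$ on a neighborhood of $\overline{g}_{*}$, since $[R_{\overline{g}}]=0$ in the quotient exactly when $R_{\overline{g}}$ is constant. (Passing to the quotient by constants avoids tracking the dependence of the mean value on the varying volume form.)

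The key computation is the linearization of $\Psi$ at $\overline{g}_{*}$ along conformal directions. Writing a conformal variation as $\overline{g}_{*}\mapsto u^{4/(m-2)}\overline{g}_{*}$ and differentiating the transformation law $R_{u^{4/(m-2)}\overline{g}_{*}} = u^{-(m+2)/(m-2)}\bigl(\tfrac{4(m-1)}{m-2}\Delta_{\overline{g}_{*}}u + R_{\overline{g}_{*}}u\bigr)$ at $u\equiv 1$, I obtain that the conformal linearization equals $\tfrac{4}{m-2}\bigl((m-1)\Delta_{\overline{g}_{*}} - R_{\overline{g}_{*}}\bigr)v$ for a variation $v$ of the conformal factor. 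The directions tangent to the normalized conformal class $[\overline{g}_{*}]^{0}_{1}$ carry the Neumann condition $\partial_{\eta_{\overline{g}_{*}}}v = 0$ and the zero-average condition, which is precisely the domain on which the excerpt records this operator to be Fredholm of index zero with image in the zero-average functions. Since $\overline{g}_{*}$ is nondegenerate, the operator is injective, hence an isomorphism onto the zero-average functions, i.e.\ onto the quotient target $\mathcal{C}^{k-2,\alpha}(M)/\mathbb{R}$.

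Two consequences follow at once. The restriction of $d\Psi_{\overline{g}_{*}}$ to the closed subspace $T_{\overline{g}_{*}}[\overline{g}_{*}]^{0}_{1}$ of conformal directions is already a linear isomorphism onto the target, so $d\Psi_{\overline{g}_{*}}$ is surjective on all of $T_{\overline{g}_{*}}\mathcal{M}^{k,\alpha}(M)_1$ and its kernel admits that conformal subspace as a closed complement; by the submersion theorem in Banach spaces, $S$ is an embedded submanifold near $\overline{g}_{*}$ with $T_{\overline{g}_{*}}S = \ker d\Psi_{\overline{g}_{*}}$. Moreover this yields the topological direct sum $T_{\overline{g}_{*}}\mathcal{M}^{k,\alpha}(M)_1 = T_{\overline{g}_{*}}S \oplus T_{\overline{g}_{*}}[\overline{g}_{*}]^{0}_{1}$ (injectivity of the restriction gives $T_{\overline{g}_{*}}S \cap T_{\overline{g}_{*}}[\overline{g}_{*}]^{0}_{1} = \{0\}$, surjectivity gives that the sum is everything), which is the asserted strong transversality at $\overline{g}_{*}$. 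Finally, since nondegeneracy is an open condition and each point of $S$ has constant scalar curvature, the same pointwise argument applies to every $\overline{g}\in S$ sufficiently close to $\overline{g}_{*}$ relative to its own normalized conformal class; after shrinking $U$ this yields transversality of $S$ to all the normalized conformal classes it meets.

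The main obstacle is the boundary: one must carry out the reduction to the operator $(m-1)\Delta_{\overline{g}_{*}} - R_{\overline{g}_{*}}$ on the correct function spaces so that the Neumann condition $\partial_{\eta_{\overline{g}_{*}}}v = 0$ is respected — this is exactly what makes the integration by parts in the Fredholm argument produce no boundary terms — and one must ensure that the complement witnessing the splitting is genuinely closed, so that the submersion theorem applies. Both points are precisely covered by the index-zero Fredholm statement quoted above, so the remainder of the proof is a matter of assembling these ingredients.
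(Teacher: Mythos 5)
Your argument is correct, and it is essentially the argument that the paper itself does not spell out but delegates entirely to \cite[Proposition~1]{Uniqueness}: realizing $S$ as the preimage of $0$ under the scalar curvature map modulo constants, and obtaining both the submanifold structure and the transverse splitting $T_{\overline{g}_{*}}\mathcal{M}^{k,\alpha}(M)_1 = T_{\overline{g}_{*}}S \oplus T_{\overline{g}_{*}}[\overline{g}_{*}]^{0}_{1}$ from the fact that $(m-1)\Delta_{\overline{g}_{*}} - R_{\overline{g}_{*}}$, on the zero-average Neumann functions, is Fredholm of index zero and injective by nondegeneracy. Your final step (openness of nondegeneracy propagating the splitting to every normalized conformal class meeting a shrunken $U$) is exactly what the cited reference packages as strong transversality, so the proposal fills in faithfully what the paper's one-line citation leaves implicit.
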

\begin{proof}
The proof is a direct application of \cite[Proposition~1]{Uniqueness}.
\end{proof}

\begin{corol}
	Let $\overline{g}_{*} \in \mathcal{M}^{k, \alpha}(M)_{1}$ be a nondegenerate metric on $M$  with constant scalar curvature and vanishing mean curvature. Then, there is an open neighborhood $U$ of $\overline{g}_{*}$ in $\mathcal{M}^{k, \alpha}(M)_{1}$ such that every $\mathcal{C}^{k, \alpha}$-normalized conformal class of metrics in $\mathcal{M}^{k, \alpha}(M)_{1}$ has at most one metric of constant scalar curvature and volume one in $U$.
\end{corol}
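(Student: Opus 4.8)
The plan is to read off the corollary from Proposition~\ref{thm:rigidity} by interpreting the counting statement ``at most one constant scalar curvature metric per conformal class'' as a statement about the intersection of the submanifold $S$ with the leaves of the conformal foliation. First I would apply Proposition~\ref{thm:rigidity} to the nondegenerate metric $\overline{g}_*$ to obtain an open neighborhood $U$ of $\overline{g}_*$ in $\mathcal{M}^{k, \alpha}(M)_1$ and the embedded submanifold $S = \{\overline{g} \in U : R_{\overline{g}} \text{ is constant}\}$, which is strongly transverse to the $\mathcal{C}^{k,\alpha}$-normalized conformal classes. Since the vanishing mean curvature hypothesis on $\overline{g}_*$ persists (up to shrinking $U$) and any competitor metric has unit volume by assumption, every metric $\overline{g} \in U$ with constant scalar curvature and volume one is precisely a point of $S$.

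Next I would reformulate the desired conclusion. Fix a metric $\overline{g}_0 \in U$ and consider its normalized conformal class $[\overline{g}_0]^{0}_{1}$, which is an embedded submanifold of $\mathcal{M}^{k, \alpha}(M)_1$ whose tangent space at any $\overline{g}$ is $T_{\overline{g}}[\overline{g}]^{0}_{1} = \{\psi \in \mathcal{C}^{k,\alpha}(M)^0 : \int_M \psi\, \omega_{\overline{g}} = 0\}$. A constant scalar curvature, volume-one metric in $U$ belonging to this class is exactly a point of $S \cap [\overline{g}_0]^{0}_{1}$, so the corollary asserts that this intersection is a singleton (or empty). Strong transversality means that at every such intersection point $\overline{g}$ one has the direct sum decomposition $T_{\overline{g}}\mathcal{M}^{k, \alpha}(M)_1 = T_{\overline{g}}S \oplus T_{\overline{g}}[\overline{g}]^{0}_{1}$; in particular $T_{\overline{g}}S \cap T_{\overline{g}}[\overline{g}_0]^{0}_{1} = \{0\}$, so $S$ meets each conformal class transversally in a discrete (zero-dimensional) set. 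This is exactly the infinitesimal statement that $S$ is a local slice for the foliation of $\mathcal{M}^{k,\alpha}(M)_1$ by normalized conformal classes.

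To upgrade discreteness to genuine uniqueness I would invoke the local normal form associated with the strong transversality. Because the tangent distribution $\overline{g} \mapsto T_{\overline{g}}[\overline{g}]^{0}_{1}$ is integrable with the conformal classes as leaves, and because $S$ provides a complementary transversal with closed complement, a tubular neighborhood / slice theorem produces, after possibly shrinking $U$ to a smaller neighborhood $U'$, a diffeomorphism $U' \cong S' \times L$ under which $S$ corresponds to $S' \times \{0\}$ and each leaf of the conformal foliation corresponds to a plaque $\{s\} \times L$. Each such plaque then meets $S' \times \{0\}$ in exactly one point, so every normalized conformal class contains at most one constant scalar curvature, volume-one metric inside $U'$, which is the assertion.

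The main obstacle will be precisely this last passage: translating the infinitesimal direct-sum decomposition into a uniform, class-independent local uniqueness statement over the whole neighborhood rather than for one fixed conformal class at a time. Making the slice structure rigorous in the Banach-manifold setting requires that the complement $T_{\overline{g}}[\overline{g}]^{0}_{1}$ vary continuously and split topologically, together with the implicit function theorem expressing $S$ as a graph over the transversal; here the nondegeneracy of $\overline{g}_*$ enters decisively, since it guarantees that the Jacobi operator $(m-1)\Delta_{\overline{g}} - R_{\overline{g}}$ appearing in $\delta^2 F$ is an isomorphism (being Fredholm of index zero with trivial kernel), which is what powers the implicit function theorem underlying Proposition~\ref{thm:rigidity}. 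Once the local product structure is in hand the counting conclusion is immediate, so I would concentrate the effort on citing or verifying that strong transversality yields this product neighborhood uniformly, exactly as in \cite{Uniqueness}.
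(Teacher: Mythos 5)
Your proposal is correct and follows essentially the same route as the paper: the paper's own proof is the single observation that the (strong) transversality of $S$ to the normalized conformal classes, furnished by Proposition~\ref{thm:rigidity}, guarantees local uniqueness of the intersections. You simply spell out, via the local slice/product structure, why transversality upgrades to genuine uniqueness on a possibly smaller neighborhood, which is exactly the content the paper delegates to \cite{Uniqueness}.
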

\begin{proof}
The fact that the manifold $S$ is transverse to the normalized conformal class guarantees the local uniqueness of intersections.
\end{proof}

\subsubsection{Bifurcation of solutions}

We begin this subsection defining bifurcation instant. To do so, let $M$ be a $m$-dimensional compact Riemannian manifold with boundary, $m \geq 3$. Define
$$
\begin{array}{rcl}
\left[ a, b \right] & \longrightarrow & \mathcal{M}^{k, \alpha}(M)_{1}, \ \ \ k \geq 3\\
s & \longmapsto & \overline{g}_{s}
\end{array}
$$
a continuous path of Riemannian metrics on $M$ having constant scalar curvature $R_{\overline{g}_{s}}$ and vanishing mean curvature $H_{\overline{g}_{s}}$, for all $s \in [a,b]$.

\begin{definition}\label{def:bif}
	An instant $s_{*} \in [a, b]$ is called a \textbf{bifurcation instant} for the family $\{\overline{g}_{s}\}_{s \in [a,b]}$ if there exists a sequence $(s_{n})_{n \geq 1} \subset [a,b]$ and a sequence $(\overline{g}_{n})_{n \geq 1} \subset \mathcal{M}^{k, \alpha}(M)_{1}$ of Riemannian metrics on $M$ satisfying:
	\begin{enumerate}
		\item[(a)] $\displaystyle\lim_{n \rightarrow \infty} s_{n} = s_{*}$ and $\displaystyle\lim_{n \rightarrow \infty} \overline{g}_{n} = \overline{g}_{s_{*}} \in \mathcal{M}^{k, \alpha}(M)_{1}$;
		\item[(b)] $\overline{g}_{n} \in [\overline{g}_{s_{n}}]$, but $\overline{g}_{n} \neq \overline{g}_{s_{n}}$, for all $n \geq 1$;
		\item[(c)] $\overline{g}_{n}$ has constant scalar curvature and vanishing mean curvature, for all $n \geq 1$.
	\end{enumerate}
	If $s_{*} \in [a,b]$ is not a bifurcation instant, the family $\{\overline{g}_{s}\}_{s \in [a,b]}$ is said \textbf{locally rigid} at $s_{*}$.
\end{definition}

An instant $s \in [a,b]$ for which $\frac{R_{\overline{g}_{s}}}{(m-1)}$ is a non-vanishing solution of problem (\ref{EP}) is called a \textbf{degeneracy instant} for the family $\{g_{s}\}_{s \in [a,b]}$. Degeneracy instants are candidates to be bifurcation instants.

Consider $M$, a compact Riemannian $m$-manifold with boundary $\partial M \neq 0$, $m \geq 3$. Let 
$$
\begin{array}{rcl}
\left[ a, b \right] & \longrightarrow & \mathcal{M}^{k, \alpha}(M)_{1}, \ \ \ k \geq 3\\
s & \longmapsto & \overline{g}_{s}
\end{array}
$$
be a continuous path of Riemannian metrics in $M$ (with volume $1$) with constant scalar curvature $R_{\overline{g}_{s}}$ and vanishing mean curvature of the boundary $H_{\overline{g}_{s}}$, for all $s \in [a,b]$. Let $G$ be a connected Lie group, finite dimensional, acting on $M$ by diffeomorphisms that preserve orientation and the metric $\overline{g}_s$, for all $s \in [a,b]$, that is,
$$
\begin{array}{rcl}
\xi : G \times M & \longrightarrow & M \\
(g, x) & \longmapsto & \xi (g,x) = \xi_g(x) = g \cdot x,
\end{array}
$$
is a group action with $G$ a Lie group such that
$$G \subset \displaystyle\bigcap_{s \in [a,b]} Iso (M,\overline{g}_s).$$
Therefore,

\vspace{0.5cm}

\begin{minipage}[r]{5cm}
	\begin{flushright}
		$
		\begin{array}{rclcl}
		\xi_g & : & M & \longrightarrow & M \\
		& & x & \longmapsto & \xi_g(x) = g \cdot x
		\end{array}
		$
	\end{flushright}
\end{minipage}
\hspace{0.5cm}
\begin{minipage}[l]{10cm}
	\begin{flushleft}
		is an isometry in $M$, for all $g \in G$, and
	\end{flushleft}	
\end{minipage}

\vspace{0.5cm}

\begin{minipage}[r]{5cm}
	\begin{flushright}
		$
		\begin{array}{rclcl}
		\xi^x & : & G & \longrightarrow & M \\
		& & g & \longmapsto & \xi^x(g) = g \cdot x
		\end{array}
		$
	\end{flushright}
\end{minipage}
\hspace{0.5cm}
\begin{minipage}[l]{10cm}
	\begin{flushleft}
		is differentiable, for all $x \in M$.
	\end{flushleft}	
\end{minipage}

\vspace{0.5cm}

Let
$$\Delta_{\overline{g}_s} : T_{\overline{g}_s}[\overline{g}_s]_1^0 \longrightarrow \mathcal{C}^{k-2, \alpha}(M),$$
be the Laplacian operator on the manifold $(M, \overline{g}_s)$ restricted to
$$T_{\overline{g}_s}[\overline{g}_s]_1^0 = \left\{ \psi \in \mathcal{C}^{k, \alpha}(M) : \partial_{\eta_s} \psi = 0 \mbox{ e } \int_M \psi \omega_{\overline{g}_s} = 0 \right\}.$$
Observe that, if $f \in T_{\overline{g}_s}[\overline{g}_s]_1^0$, then, for all isometry $\xi \in G$, we have \begin{align}\label{eq:int}
	\int_M (f \circ \xi) \ \omega_{\overline{g}_s} = \int_M (f \circ \xi) \ \xi^*(\omega_{\overline{g}_s}) = \int_M  \xi^* (f \ \omega_{\overline{g}_s}) = \int_M f \ \omega_{\overline{g}_s} = 0,
\end{align}
besides that, as we already show at the Proposition \ref{prop:autesp}, $\partial_{\eta_s} (f \circ \xi) = 0$, so $f \circ \xi \in T_{\overline{g}_s}[\overline{g}_s]_1^0$. 

Remember that $\Delta_{\overline{g}_s} \coloneqq - div_{\overline{g}_s} \nabla$ has infinitely many non-negative eigenvalues and the dimensions of the associated eigenspaces are all finite. Now, for each eigenvalue $\rho$ of $\Delta_{\overline{g}_s}$, solution for the Neumann problem
\begin{equation}
	\left\{
	\begin{array}{rcl}
		\Delta_{\overline{g}_s} f & = & \rho f, \mbox{ on $M$} \\
		\partial_{\eta_s} f & = & 0, \mbox{ on $\partial M$},
	\end{array}
	\right.
	\label{Neumann}
\end{equation}
where $\eta_s$ is the unitary (inward) vector field normal to the boundary of $M$, denote by $V_{s, \rho} \subset  T_{\overline{g}_s}[\overline{g}_s]_1^0 $ the correspondent eigenspace. 

\begin{proposition}
	For each $\rho$, solution of \eqref{Neumann}, the linear aplication
	$$\pi_{s,\rho} : G \longrightarrow GL(V_{s,\rho}),$$
	defined by 
	$$
	\begin{array}{rclcl}
	\pi_{s, \rho}(\xi) & : & V_{s, \rho} & \longrightarrow & V_{s, \rho} \\
	& & f & \mapsto & f \circ \xi
	\end{array}
	$$
	is a (anti-)representation of $G$ in $V_{s, \rho}$. 
\end{proposition}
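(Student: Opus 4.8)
The plan is to argue exactly as in Proposition~\ref{prop:repres}, the only additional point being to verify that the action keeps us inside the constrained space $T_{\overline{g}_s}[\overline{g}_s]_1^0$. First I would check that $\pi_{s,\rho}(\xi)$ is well defined, i.e. that $f \circ \xi \in V_{s,\rho}$ whenever $f \in V_{s,\rho}$. Since each $\xi \in G$ is an isometry of $(M,\overline{g}_s)$, Proposition~\ref{prop:autesp} guarantees that $f \circ \xi$ is again an eigenfunction of $\Delta_{\overline{g}_s}$ for the same eigenvalue $\rho$ and that it still satisfies the Neumann condition $\partial_{\eta_s}(f \circ \xi) = 0$. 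It remains only to confirm that $f \circ \xi$ lies in $T_{\overline{g}_s}[\overline{g}_s]_1^0$, i.e. that it has vanishing integral; but this is precisely the content of the computation \eqref{eq:int}, which exploits the invariance $\xi^*(\omega_{\overline{g}_s}) = \omega_{\overline{g}_s}$. Hence $\pi_{s,\rho}(\xi)$ maps $V_{s,\rho}$ into itself, and being linear with inverse $\pi_{s,\rho}(\xi^{-1})$, it belongs to $GL(V_{s,\rho})$.

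Next I would establish the (anti-)homomorphism property. For $\xi,\zeta \in G$ and $f \in V_{s,\rho}$ one computes directly
$$\pi_{s,\rho}(\xi\zeta)f = f \circ (\xi\zeta) = (f \circ \xi)\circ \zeta = \pi_{s,\rho}(\zeta)\bigl(\pi_{s,\rho}(\xi)f\bigr) = \bigl(\pi_{s,\rho}(\zeta)\circ\pi_{s,\rho}(\xi)\bigr)f,$$
so that $\pi_{s,\rho}(\xi\zeta) = \pi_{s,\rho}(\zeta)\circ\pi_{s,\rho}(\xi)$, which is the anti-homomorphism relation. Combined with smoothness of $\xi \mapsto \pi_{s,\rho}(\xi)$---inherited from the differentiability of the action together with the finite dimensionality of $V_{s,\rho}$, which lets one write $\pi_{s,\rho}$ in matrix form relative to a fixed basis $f_1,\dots,f_d$ of $V_{s,\rho}$ whose entries depend smoothly on $\xi$---this shows that $\pi_{s,\rho}$ is an (anti-)representation of $G$ in $V_{s,\rho}$.

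I do not expect any genuine obstacle, since the statement is just the specialization of Proposition~\ref{prop:repres} to the family $\{\overline{g}_s\}_{s>0}$ and to the Neumann eigenspaces $V_{s,\rho}$ sitting inside $T_{\overline{g}_s}[\overline{g}_s]_1^0$. The one step that requires a word beyond Proposition~\ref{prop:repres} is checking the zero-integral constraint, and that is already disposed of by \eqref{eq:int}; the remaining invariance properties (preservation of the eigenvalue and of the Neumann boundary condition) and the algebraic composition identity are immediate from the earlier propositions.
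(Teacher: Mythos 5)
Your proof is correct and follows the same route as the paper: the paper's own argument likewise reduces the claim to Proposition~\ref{prop:autesp}, Proposition~\ref{prop:repres}, and the computation \eqref{eq:int} for the zero-integral constraint. Your version merely spells out these citations (plus the smoothness of $\xi\mapsto\pi_{s,\rho}(\xi)$) in more detail.
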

\begin{proof}
	The result follows from \eqref{eq:int} and from the results of the Section \ref{sec:iso_rep}, where we proved that the eigenspaces of the Laplacian are invariant by isometries (Proposition \ref{prop:autesp}) and that the isometric action of $G$ in $M$ determines (anti-)representations of $G$ in all eigenspaces of the Laplacian (Proposition \ref{prop:repres}). 
\end{proof}

We see that a metric $\overline{g}_s$, which is a critical point of the Hilbert-Einstein functionl restricted to $[\overline{g}]^0_1$ is non-degenerate if and only if $R_{\overline{g}_s}= 0$ or $\frac{R_{\overline{g}_s}}{m-1}$ is not an eigenvalue of the operator $\Delta_{\overline{g}_s}$, with Neumann conditions on the boundary. Proposition~\ref{thm:rigidity} ensures local uniqueness of solution for the Yamabe problem if $\overline{g}_s$ is non-degenerated. So that, the bifurcation instants may occur among the degenerated instants. Therefore, if we want to establish conditions for which there is bifurcation, it is in our interest to study the spectrum of the Laplacian, or equivalently, the spectrum of the operator 
$$\mathcal{J}_s = \Delta_{\overline{g}_s} - \frac{R_{\overline{g}_s}}{m-1}I,$$
where $I$ denotes the identity operator and whose domain is $\left\{ \psi \in \mathcal{C}^{k, \alpha}(M)^0 : \displaystyle\int_{M} \psi \ \omega_{\overline{g}_s} = 0 \right\}$. As the Laplacian eigenvalues are all positive and $0 = \rho_0 < \rho_1 < \ldots < \rho_t < \ldots $, there is a finite number of eigenvalues $\rho \leq \frac{R_{\overline{g}_s}}{m-1}$, that is, there are finitely many negative eigenvalues of the operator $\mathcal{J}_s$. 

Thus, for each $s \in [a,b]$, we can define (anti-)representations of $G$ in the negative eigenspace of $\mathcal{J}_s$, considering the direct sum representation,
$$
\begin{array}{ccccl}
\pi_s^- & : & G & \longrightarrow & GL(V_s^-) \\
& & \xi & \longmapsto & \pi_s^-(\xi)
\end{array},
$$
where
$$\pi^-_s = \displaystyle\bigoplus_{\rho \leq \frac{R_{\overline{g}_s}}{m-1}} \pi_{s, \rho} \ \ \ \ \ \mbox{e} \ \ \ \ \ V^-_s = \displaystyle\bigoplus_{\rho \leq \frac{R_{\overline{g}_s}}{m-1}} V_{s, \rho},$$
and the linear transformation
$$
\begin{array}{ccccl}
\pi_s^-(\xi) & : & V_s^- & \longrightarrow &V_s^- \\
& & f & \longmapsto & \pi_s^-(\xi) f
\end{array},
$$
given by
$$\pi_s^-(\xi) f  = \pi_{s, \rho_{0}}(\xi) f_0 \oplus \ldots \oplus \pi_{s, \rho_{r}}(\xi) f_r,$$
with $f = f_0 \oplus \ldots \oplus f_r \in V_s^-$,  where $f_i \in V_{s, \rho_i}$, for all $i=0, ..., r$.

The following result establishes the bifurcation condition for the Yamabe problem in manifolds with boundary. This equivariant bifurcation theorem for manifolds with minimal boundary is a natural extension of the \cite[Teorema~3.4]{Lima} and the proof is essentially the same. Here we transcribe a detailed proof, valid for the case of manifolds with boundary. The result follows from the equivariant bifurcation theorem for smoothly varying domains, proved in \cite[Appendix~A.2]{Lima}, that is a slightly different version of Smoler and Wasserman \cite{Wasserman} theorem, so the proof consists in verify its eight assumptions. First, let us give the following definition.

\begin{definition}
	Given a Banach space $B$, a family of Banach submanifolds of $B$, $[a,b] \ni s \mapsto B_s$ is called a $\mathcal{C}^1$-family of submanifolds of $B$, if the set
	$$\mathcal{B} = \left\{ (x, s) \in B \times [a,b] : x \in B_s \right\}$$
	has the structure of a subbundle $\mathcal{C}^1$ of the trivial bundle $B \times [a,b]$. Analogously, we can define a $\mathcal{C}^1$-family of closed subspaces of $B$ as a family $[a,b] \ni s \mapsto S_s$ of subspaces of the Banach space $B$ such that the set
	$$\mathcal{S} = \left\{ (x, s) \in B \times [a,b] : x \in S_s \right\}$$
	is a subbundle of the Banach trivial bundle $B \times [a,b]$.
\end{definition}
Note that, with the notations established in the above definition, if $s \mapsto x_s \in B$ is a $\mathcal{C}^1$-path, $\mathcal{B} = \bigcup_{s \in [a,b]} (B_s \times \{s\})$ is a $\mathcal{C}^1$-family of submanifolds of $B$, with $x_s \in B_s$ for all $s$, and the path $s \mapsto T_{x_s} S_s$ is a $\mathcal{C}^1$-family of closed subspaces of $B$.

\begin{theorem}\label{thm:bifeq}
	In the situation described above, suppose the family $\{\overline{g}_s\}_{s \in [a,b]}$ is locally rigid in the extremes of the interval $[a,b]$ and that the representations $\pi^-_a$ e $\pi^-_b$ are non-equivalent. Then, there exists a bifurcation instant $s_*$ for the family $\{\overline{g}_s\}_{s \in [a,b]}$ in the interval $(a,b)$.  
\end{theorem}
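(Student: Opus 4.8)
The plan is to recast the statement as an abstract equivariant bifurcation problem and then verify the eight hypotheses of the variant of the Smoller--Wasserman theorem proved in \cite[Appendix~A.2]{Lima}. First I would fix the variational framework: for each $s \in [a,b]$, the metrics of constant scalar curvature, unit volume and vanishing mean curvature conformal to $\overline{g}_s$ are exactly the critical points of the Hilbert--Einstein functional $F$ restricted to the normalized conformal class $[\overline{g}_s]^0_1$. The reference metric $\overline{g}_s$ is itself such a critical point, so $s \mapsto \overline{g}_s$ is a distinguished ``trivial branch'' of solutions. Because $G$ acts by orientation-preserving isometries of each $\overline{g}_s$, the pullback action $f \mapsto f\circ\xi$ preserves the scalar curvature, the volume form and the boundary, so $F$ is $G$-invariant and the trivial branch is $G$-fixed; moreover, since $M$ is compact, $G$ is a compact subgroup of the isometry group, which is what the abstract theorem requires. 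This is the symmetry that the whole argument exploits.

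Next I would trivialize the varying domains into a common ambient bundle. The family $s \mapsto [\overline{g}_s]^0_1$ is a $\mathcal{C}^1$-family of Banach submanifolds in the sense defined above, with the trivial branch $\overline{g}_s$ lying in each fiber; correspondingly $s \mapsto T_{\overline{g}_s}[\overline{g}_s]^0_1$ is a $\mathcal{C}^1$-family of closed subspaces. On each tangent space the second variation of $F$ at $\overline{g}_s$ is, up to the positive constant $\tfrac{(m-2)(m-1)}{2}$, the quadratic form associated with the operator $\mathcal{J}_s = \Delta_{\overline{g}_s} - \tfrac{R_{\overline{g}_s}}{m-1}I$ subject to the Neumann condition. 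As recalled above, $\mathcal{J}_s$ is a self-adjoint Fredholm operator of index zero whose negative eigenspace $V^-_s$ is finite-dimensional and carries the $G$-representation $\pi^-_s$. The use of the Hölder spaces $\mathcal{C}^{k,\alpha}$ together with elliptic regularity is precisely what makes the gradient of $F$ a $G$-equivariant compact perturbation of an isomorphism, so that the relevant Palais--Smale and properness hypotheses of the abstract theorem can be met.

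With this dictionary in place I would check the eight assumptions one by one: $(i)$ $G$ is a compact connected Lie group acting linearly and isometrically on the $L^2$-completion via pullback; $(ii)$ the $\mathcal{C}^1$-family of $G$-invariant submanifolds $[\overline{g}_s]^0_1$ and its associated family of tangent spaces; $(iii)$ the $G$-invariance and $\mathcal{C}^2$-regularity of the functionals $F$; $(iv)$ the $G$-fixed trivial branch $\overline{g}_s$ of critical points; $(v)$ the equivariant self-adjoint Fredholm structure of the Hessians $\mathcal{J}_s$ in compact-perturbation form; $(vi)$ nondegeneracy at the endpoints, which follows from local rigidity at $a$ and $b$ together with Proposition~\ref{thm:rigidity}, so that $\mathcal{J}_a$ and $\mathcal{J}_b$ are invertible and $a,b$ are not bifurcation instants; $(vii)$ the non-equivalence of the endpoint representations $\pi^-_a$ and $\pi^-_b$, which is the hypothesis; and $(viii)$ the compactness condition on the nonlinearity. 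Granted these, the abstract theorem produces an interior bifurcation instant $s_* \in (a,b)$, which is exactly the conclusion. The mechanism is topological: were the trivial branch locally rigid throughout $[a,b]$, an equivariant continuation argument would yield a $G$-equivariant isomorphism between $V^-_a$ and $V^-_b$, making the two representations equivalent; their assumed non-equivalence forbids this and forces some degeneracy instant in $(a,b)$ to be a genuine bifurcation.

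The step I expect to be the main obstacle is verifying the equivariant Fredholm and compactness hypotheses $(v)$ and $(viii)$ in the boundary setting: confirming that the Neumann condition $\partial_{\eta_s}(\,\cdot\,)=0$ is preserved by the $G$-action (which holds by Proposition~\ref{prop:autesp}) and that the constrained operator $\mathcal{J}_s$ on $T_{\overline{g}_s}[\overline{g}_s]^0_1$ retains index zero and the compact-perturbation-of-isomorphism form once the unit-volume and zero-average constraints are imposed. This is where the manifold-with-boundary case genuinely departs from \cite{Lima}, and where the $\mathcal{C}^{k,\alpha}$-functional-analytic setup of Section~\ref{sec:general} must be combined with elliptic regularity to close the argument.
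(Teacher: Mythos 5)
Your proposal is correct and follows essentially the same route as the paper: both reduce the statement to the equivariant bifurcation theorem of \cite[Appendix~A.2]{Lima} and verify its hypotheses by building the $\mathcal{C}^1$-families $\mathcal{D}^0_s$, $\mathcal{E}^0_s$, checking $G$-invariance and equivariance of the (gradient) map whose linearization at the trivial branch is $(m-1)\mathcal{J}_s$, establishing the Fredholm index-zero and finite-negative-spectrum properties with the Neumann condition, and using endpoint nondegeneracy plus the non-equivalence of $\pi^-_a$ and $\pi^-_b$ to invoke \cite[Theorem~A.3]{Lima}. The only cosmetic difference is that the paper works directly with the fiber-bundle morphism $F(\phi,s)=\bigl(R_{\phi\overline{g}_s}-\int_M R_{\phi\overline{g}_s}\,\omega_{\overline{g}_s},\,s\bigr)$ rather than with the Hilbert--Einstein functional and its Hessian, but these are the same verification.
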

\begin{proof}
	Let $B_0 = \mathcal{C}^{k-2, \alpha}(M)$, $B_2 = \mathcal{C}_+^{k, \alpha}(M)$ be Banach spaces and $\mathcal{H} = L^2(M)$ the space of square integrable functions with respect to the measure induced by the volume form on $M$ associated to any of the metrics $\overline{g}_s$.  Consider an action of $G$ on these spaces via pull-back and define the vector bundles
	$$\mathcal{D}^0 = \left\{ (\phi, s) \in \mathcal{C}^{k, \alpha}_+(M) \times [a,b] : \displaystyle\int_M \phi^{\frac{m}{2}} \ \omega_{\overline{g}_s} = 1, \ \partial_{\eta_s} \phi = 0  \right\},$$
	and
	$$\mathcal{E}^0 = \left\{ (\psi, s) \in \mathcal{C}^{k-2, \alpha}(M) \times [a,b] : \displaystyle\int_M \psi \ \omega_{\overline{g}_s} = 0, \ \partial_{\eta_s} \psi = 0 \right\},$$
	on the interval $[a,b]$, where $\eta_s$ is the unitary (inward) vector field normal to the boundary of $M$, with respect to the metric $\overline{g}_s$.
	
	The set
	$$\mathcal{E}_s^0 = \left\{ \psi \in \mathcal{C}^{k-2, \alpha}(M) : \displaystyle\int_M \psi \ \omega_{\overline{g}_s} = 0, \ \partial_{\eta_s} \psi = 0 \right\}$$
	is closed in $\mathcal{C}^{k-2, \alpha}(M)$ and it contains the null function, so it is a Banach subspace of $B_0$. In particular, $\mathcal{E}^0$ is a subbundle of the trivial bundle $\mathcal{C}^{k-2, \alpha}(M) \times [a,b]$, hence $s \mapsto \mathcal{E}^0_s$ is a $\mathcal{C}^1$-family of closed subspaces of the Banach space $B_0$. Observe that $G$ acts on $\mathcal{M}^{k,\alpha}(M)_1$, via pull-back: for each isometry $\xi$ and each metric $\tilde{g} \in \mathcal{M}^{k,\alpha}(M)_1$, we have $\xi^*{\tilde{g}} =\tilde{g}$. Therefore, for all $\psi \in \mathcal{E}^0_s$, we have 
	$$ \int_M \psi \circ \xi \ \omega_{\overline{g}_s} = \int_M \xi^*(\psi \ \omega_{ \overline{g}_s}) =  \int_M \psi \ \omega_{\overline{g}_s} = 0,$$
	and $$\partial_{\eta_s} (\xi \circ \psi) = \partial_{\eta_s} \xi (\psi) \cdot \partial_{\eta_s} \psi = 0,$$
	so,  $\mathcal{E}^0_s$ é $G$-invariant.
	
	Analogously, we define
	$$\mathcal{D}_s^0 = \left\{ \phi \in \mathcal{C}_{+}^{k, \alpha}(M) : \displaystyle\int_M \phi^{\frac{m}{2}} \ \omega_{\overline{g}_s} = 1, \ \partial_{\eta_s} \phi = 0 \right\}.$$		
	Observe that, for each $s$,  $\mathcal{D}_s^0$ can be identified with $[\overline{g}_s]^0_1$ that is an embedded submanifold of $[\overline{g}_s]$, that can be identified with $B_2$. So, $\mathcal{D}_s^0$ is a submanifold of $B_2$. As $\mathcal{D}^0$ is a subbundle of the trivial bundle $B_2 \times [a,b]$, it follows that $\mathcal{D}^0_s$ is a $\mathcal{C}^1$-family of submanifolds of $B_2$. The action of $G$ leaves invariant $\mathcal{D}^0$, because for all $\phi \in \mathcal{D}^0_s$, $\xi \in G$, we have $\partial_{\eta_s} (\xi \circ \psi) = 0,$ and
	$$ \int_M (\phi \circ \xi)^{\frac{m}{2}} \ \omega_{\overline{g}_s} =  \int_M (\phi^{\frac{m}{2}} \circ \xi) \ \omega_{\overline{g}_s} = \int_M \xi^*(\phi^{\frac{m}{2}} \ \omega_{\overline{g}_s}) =  \int_M \phi^{\frac{m}{2}} \ \omega_{\overline{g}_s} = 1.$$
	
	Define closed subespace of $\mathcal{H}$,
	$$\mathcal{H}_s = \left\{ \kappa \in L^2(M) : \displaystyle\int_M \kappa \ \omega_{\overline{g}_s} = 0 \right\},$$
	and note that, for each $s \in [a,b]$ there exists a complete inner product, given by
	\begin{equation}\label{innerL2}
		\langle \kappa_1, \kappa_2 \rangle_s = \displaystyle\int_M \kappa_1 \cdot \kappa_2 \ \omega_{\overline{g}_s},
	\end{equation}
	for each $ \kappa_1, \kappa_2 \in \mathcal{H}_s$. Similarly to what we did for $\mathcal{E}^0_s$, it can be shown that $\mathcal{H}_s$ is $G$-invariant.
	
	Now, we define the application
	$$
	\begin{array}{rrccl}
	F & : & \mathcal{D}^0 & \longrightarrow & \mathcal{E}^0 \\
	& & (\phi, s) & \longmapsto & \left( R_{\phi \overline{g}_s} - \displaystyle\int_M R_{\phi \overline{g}_s} \ \omega_{\overline{g}_s}, s \right);
	\end{array}
	$$
	that is a fiber bundle morphism. 
	
	Note that $s \mapsto \phi_s \in \mathcal{D}^0_s$ is a $\mathcal{C}^1$-section of the fiber bundle $\mathcal{D}^0$ and $s \mapsto \psi_s \in \mathcal{E}^0_s$ is a $\mathcal{C}^1$-section of the bundle $\mathcal{E}^0$ and $F(\phi, s) = (\psi, s)$, if $(\phi, s) \in \mathcal{D}^0$. Now, the metric $\phi \overline{g}_s \in [\overline{g}_s]$ has constant scalar curvature if and only if $F(\phi, s) = (\mathbf{0}, s ) = \mathbf{0}_s$, where, for each $s$, $\mathbf{0}_s$ is the null function in $\mathcal{E}^0_s$. Hence, we are interested in the inverse image by $F$ of the null section of the fiber bundle $\mathcal{E}^0$, which we denote by
	$$F^{-1}(\mathbf{0}_s) = \left\{(\phi, s) \in \mathcal{D}^0 : F(\phi, s) = (\mathbf{0}, s) \right\}.$$
	Observe that the constant section 
	$$
	\begin{array}{ccccl}
	\mathbbm{1}_{\mathcal{D}^0} & : & [a,b] & \longrightarrow & \mathcal{D}^0_s \\ [0,2cm]
	& & s & \longmapsto & \mathbbm{1}_{\mathcal{D}^0}(s) = \mathbf{1}_s,
	\end{array}
	$$
	that associates to each $s \in [a,b]$ the constant function equals to $1$, denoted by $\mathbf{1}_s : M \rightarrow \mathbb{R}$, in $\mathcal{D}^0_s$, belongs to $ F^{-1}(\mathbf{0}_s)$, since by hypothesis $\overline{g}_s$ has constant scalar curvature, for each $s \in [a,b].$
	The orbits of $\mathbf{1}_s$ and $\mathbf{0}_s$ by the action of $G$ on $\mathcal{E}^0_s$ and $\mathcal{D}^0_s$ (induced by the action of $G$ on $B_0$ and on $B_2$) are trivial -- $\mathbf{1}_s \cdot \xi = \mathbf{1}_s$ -- and $\mathbf{0}_s \circ \xi = \mathbf{0}_s$, that is, $G$ fixes the orbits of $\mathbf{1}_s$ and of $\mathbf{0}_s$.
	The tangent space of $\mathcal{D}^0_s$ in $\mathbf{1}_s$ is given by
	$$T_{\mathbf{1}_s}\mathcal{D}_s^0 = \left\{ \psi \in \mathcal{C}^{k, \alpha}(M) : \displaystyle\int_M \psi \ \omega_{\overline{g}_s} = 0, \ \partial_{\eta_s} \psi = 0 \right\}.$$
	Remember the set
	$$\mathcal{C}^{k, \alpha}(M)^0  = \left\{ \psi \in \mathcal{C}^{k, \alpha}(M) : \partial_{\eta_s} \psi = 0  \right\};$$
	then, the inclusions
	$$\mathcal{C}^{k, \alpha}(M)^0  \subset \mathcal{C}^{k, \alpha}(M) \subset \mathcal{C}^{k-2, \alpha}(M) \subset L^2(M)$$
	induce inclusions
	$$T_{\mathbf{1}_s}\mathcal{D}_s^0 \subset \mathcal{E}_s^0 \subset \mathcal{H}_s,$$
	for all $s$ in the interval $[a,b]$.
	
	Let us show that $F_s = F(\cdot , s) : \mathcal{D}^0_s \rightarrow \mathcal{E}^0_s$ is a gradient operator for all $s \in [a,b]$, that is, the differential $(dF_s)_{\mathbf{1}_s} : T_{\mathbf{1}_s}\mathcal{D}^0_s \rightarrow \mathcal{E}^0_s$ is symmetric with respect to a complete inner product in $\mathcal{H}_s$ for each $s \in [a,b]$. Indeed, for each $\psi \in T_{\mathbf{1}_s}\mathcal{D}^0_s$,  $(dF_s)_{\mathbf{1}_s}$ is given by
	\begin{align*}
		\left( dF_s \right)_{\mathbf{1}_s}(\psi)
		& = \left. \displaystyle\frac{d}{dt} \right|_{t=0} F_s (\mathbf{1}_s + t \psi) \\
		& = \left. \displaystyle\frac{d}{dt} \right|_{t=0}  \left( R_{(\mathbf{1}_s + t \psi) \overline{g}_s } - \displaystyle\int_M R_{(\mathbf{1}_s + t \psi) \overline{g}_s } \omega_{\overline{g}_s} \right) \\
		& =\left. \displaystyle\frac{d}{dt} \right|_{t=0} \left( R_{(\overline{g}_s + t \psi \overline{g}_s) } - \displaystyle\int_M R_{(\overline{g}_s + t \psi \overline{g}_s) } \omega_{\overline{g}_s} \right)
	\end{align*}
	Observe that $\overline{g}_s + t \psi \overline{g}_s = \overline{g}_s(t)$ is a variation of $\overline{g}_s$ in the direction of the tensor $h_s  = \psi \overline{g}_s \in T_{\overline{g}}[\overline{g}]^0_1$. Then, 
	we have
	\begin{align*}
		\left. \displaystyle\frac{d}{dt} \right|_{t=0} R_{\overline{g}_s(t)} 
		& = - \psi \overline{g}_s^{ij} R_{ij} + \nabla_i (\nabla_j \psi \overline{g}_s^{ij} - \nabla^i (m \psi)) \\ 
		& = - \psi \overline{g}_s^{ij} R_{ij} + \nabla_i \left(\nabla^i \psi - m \nabla^i\psi \right) \\ 
		& =  - \psi R_{\overline{g}_s} - \Delta_{\overline{g}_s} \psi + m \Delta_{\overline{g}_s} \psi \\ 
		& =  (m-1) \Delta_{\overline{g}_s} \psi - \psi R_{\overline{g}_s},
	\end{align*}
	where we are considering the geometric Laplacian, $\Delta \psi = - \nabla_i \nabla^i \psi$, and using Einstein's notation; so
	\begin{align}\label{int_null}
		\begin{split}
			\left. \displaystyle\frac{d}{dt} \right|_{t=0} \displaystyle\int_M R_{\overline{g}_s(t)} \ \omega_{\overline{g}_s} 
			& = \displaystyle\int_M \left( (m-1) \Delta_{\overline{g}_s} \psi - \psi R_{\overline{g}_s} \right) \ \omega_{\overline{g}_s} \\
			& = (m-1) \displaystyle\int_M \Delta_{\overline{g}_s} \psi \ \omega_{\overline{g}_s} - \underbrace{R_{\overline{g}_s}}_{cte}  \underbrace{\displaystyle\int_M  \psi \ \omega_{\overline{g}_s}}_{= 0} \\
			& = (m-1) \displaystyle\int_{\partial M} \langle \nabla \psi , \eta_s \rangle \ \omega_{\overline{g}_s} \\
			&= (m-1) \displaystyle\int_{\partial M} \partial_{\eta_s} \psi \ \omega_{\overline{g}_s} = 0
		\end{split}
	\end{align}
	
	Therefore, $(d F_s)_{\mathbf{1}_s} (\psi) = (m-1) \Delta_{\overline{g}_s} \psi - R_{\overline{g}_s} \psi$, that can be expressed as 
	$$(d F_s)_{\mathbf{1}_s} (\psi) = (m-1) \left( \Delta_{\overline{g}_s} \psi - \displaystyle\frac{R_{\overline{g}_s}}{m-1} \psi \right).$$
	
	Now, the linear operator $\mathcal{J}_s = \Delta_{\overline{g}_s} - \displaystyle\frac{R_{\overline{g}_s}}{m-1} I$, defined in $T_{\mathbf{1}_s} \mathcal{D}^0_s$, is symmetric with respect to the inner product $\langle \ \cdot \ , \ \cdot \ \rangle_s$. Indeed, if $\psi, \varphi \in T_{\mathbf{1}_s} \mathcal{D}^0_s$, then
	\begin{align*}
		\left\langle \mathcal{J}_s \psi , \varphi \right\rangle_s 
		& = \displaystyle\int_M \left( \Delta_{\overline{g}_s} \psi - \displaystyle\frac{R_{\overline{g}_s}}{m-1} \psi \right) \varphi \ \omega_{\overline{g}_s} \\
		& = \displaystyle\int_M \left( \Delta_{\overline{g}_s} \psi  \right) \varphi \ \omega_{\overline{g}_s} - \displaystyle\int_M {\left( \displaystyle\frac{R_{\overline{g}_s}}{m-1} \psi \right) \varphi} \ \omega_{\overline{g}_s} \\
		& = \displaystyle\int_M {\psi \left( \Delta_{\overline{g}_s} \varphi  \right)} \ \omega_{\overline{g}_s} - \displaystyle\int_M {\psi \left( \displaystyle\frac{R_{\overline{g}_s}}{m-1} \varphi \right) } \ \omega_{\overline{g}_s} \\
		& =  \displaystyle\int_M \psi \left( \Delta_{\overline{g}_s} \varphi - \displaystyle\frac{R_{\overline{g}_s}}{m-1} \varphi \right) \ \omega_{\overline{g}_s} \\
		& =  \left\langle \psi , \mathcal{J}_s \varphi \right\rangle_s.
	\end{align*}
	Hence, $F_s$ is a gradient operator, as we want.
	
	Besides that, the application $F$ is $G$-equivariant, that is, 
	$$F(\phi \cdot \xi, s) = F(\phi, s) \cdot \xi,$$
	or, equivalently, $F_s(\phi \cdot \xi) = F_s(\phi) \cdot \xi,$ for all $(\phi, s) \in \mathcal{D}^0$.
	Truly, we have
	\begin{align*}
		F_s(\phi \cdot \xi)
		& = F_s(\phi \circ \xi) \\
		& = R_{(\phi \circ \xi) \overline{g}_s} - \displaystyle\int_M R_{(\phi \circ \xi) \overline{g}_s} \ \omega_{\overline{g}_s} ;
	\end{align*}
	on the other hand,  $F_s(\phi) \cdot \xi = F_s(\phi) \circ \xi = \xi^* F_s(\phi)$; it follows from $R_{(\phi \circ \xi)\overline{g}_s} = R_{\phi \overline{g}_s}\circ\xi $ that
	$$F_s(\phi \cdot \xi) = R_{(\phi \circ \xi) \overline{g}_s} - \displaystyle\int_M R_{(\phi \circ \xi) \overline{g}_s} \ \omega_{\overline{g}_s} = \underbrace{R_{\phi \overline{g}_s}\circ\xi}_{\xi^*R_{\phi \overline{g}_s}} - \displaystyle\int_M \underbrace{R_{\phi \overline{g}_s}\circ\xi \ \omega_{\overline{g}_s}}_{\xi^*(R_{\phi \overline{g}_s} \omega_{\overline{g}_s})} = F_s(\phi) \cdot \xi,$$
	and $F$ is $G$-equivariant.
	
	Observe that
	$$(d F_s)_{\mathbf{1}_s} (\psi) = (m-1) \Delta_{\overline{g}_s} \psi - R_{\overline{g}_s} \psi,$$
	maps $T_{\mathbf{1}_s}\mathcal{D}^0_s$ in $\mathcal{E}^0_s$. Indeed, $\partial_{\eta_s} \left( (m-1) \Delta_{\overline{g}_s} \psi - \psi R_{\overline{g}_s} \right) = 0$, for all $\psi \in T_{\mathbf{1}_s}\mathcal{D}_s^0$, and,
	\begin{align*}
		\displaystyle\int_M  (m-1) \Delta_{\overline{g}_s} \psi - R_{\overline{g}_s} \psi \ \omega_{\overline{g}_s} 
		& =  (m-1) \displaystyle\int_M \Delta_{\overline{g}_s} \psi \ \omega_{\overline{g}_s} - \underbrace{R_{\overline{g}_s}}_{cte}  \underbrace{\displaystyle\int_M  \psi \ \omega_{\overline{g}_s}}_{= 0} \\
		& = (m-1) \displaystyle\int_{\partial M} \underbrace{\langle \nabla \psi , \eta_s \rangle}_{\partial_{\eta_s} \psi = 0} \omega_{\overline{g}_s} = 0.
	\end{align*} Now, 
	$$(d F_s)_{\mathbf{1}_s}: T_{\mathbf{1}_s}\mathcal{D}^0_s \longrightarrow \mathcal{E}^0_s$$
	is Fredholm of index zero. In fact, it is a classic fact from analysis, that the Laplacian operator $\Delta_{\overline{g}}$ is Fredholm of index zero as an operator from $\mathcal{C}^{k, \alpha}(M)$ into the space $\mathcal{C}^{k-2, \alpha}(M)$. Besides that, the operator $\frac{R_{\overline{g}_s}}{m-1} I$ is compact from $\mathcal{C}^{k, \alpha}(M)$ into $\mathcal{C}^{k - 2, \alpha}(M)$ (because $k + \alpha > k -2 + \alpha$ implies that the inclusion $i : \mathcal{C}^{k, \alpha}(M) \lhook\joinrel\longrightarrow \mathcal{C}^{k - 2, \alpha}(M) $ is compact). Now, as $T_{\mathbf{1}_s}\mathcal{D}_s$ is a closed subspace of  $\mathcal{C}^{k, \alpha}(M)$, $\mathcal{E}_s$ is a closed subspace of $\mathcal{C}^{k-2, \alpha}(M)$, and
	$$T_{\bf{1}}\mathcal{D}_s \overset{i}{\lhook\joinrel\longrightarrow} \mathcal{C}^{k, \alpha}(M) \overset{\Delta_{\overline{g}_s}+\frac{R_{\overline{g}_s}}{{m-1}}}{\longrightarrow} \mathcal{C}^{k-2, \alpha}(M) \overset{p}{\longrightarrow} \mathcal{E}_s,$$
	where
	\begin{itemize}
		\item $\mbox{dim ker}(i) = 0, \ \mbox{codim im}(i) = 1 \Longrightarrow \mbox{ind }(i) = -1,$
		\item $\mbox{dim ker}(p) = 1, \ \mbox{codim im}(p) = 0 \Longrightarrow \mbox{ind }(p) = 1,$
	\end{itemize}
	it follows that $\Delta_{\overline{g}_s} +\displaystyle\frac{R_{\overline{g}_s}}{{m-1}} : T_{\mathbf{1}_s}\mathcal{D}_s \longrightarrow \mathcal{E}_s$ is Fredholm of index zero, as we want.
	
	We know that there exists an orthonormal basis $\beta_s$ of eigenfunctions of the Laplacian $\Delta_{\overline{g}_s}$ for the Hilbert space $L^2(M, \overline{g}_s)$ with respect to the inner product \eqref{innerL2}, besides, $\mathcal{H}_s$ is a closed subspace of $L^2(M, \overline{g}_s)$ which is orthogonal to the $1$-dimensional subspace of the constant functions:
	$$L^2(M,\overline{g}_s) = \mathcal{H}_s \oplus \{ \iota \in L^2(M, \overline{g}_s) : \iota \mbox{ é constante } \}.$$
	The subspace of the constant functions is generated by the eigenfunction equals to $1$, associated to the null eigenvalue of the Laplacian, so $\beta_s \backslash \{\mathbf{1}\}$ is a basis for $\mathcal{H}_s$ formed by eigenfunctions of the Laplacian. Observe that, if $\lambda$ is an eigenvalue of $\Delta_{\overline{g}_s}$, with the Neumann boundary condition, then $\lambda - \frac{R_{\overline{g}_s}}{m-1}$ is an eigenvalue of $\Delta_{\overline{g}_s} - \frac{R_{\overline{g}_s}}{m-1}$ and $f$ is eigenfunction of $\Delta_{\overline{g}_s}$ associated to the eigenvalue $\lambda$ if and only if $f$ is an eigenfunction of $\Delta_{\overline{g}_s} - \frac{R_{\overline{g}_s}}{m-1}$ associated to the eigenvalue $\lambda - \frac{R_{\overline{g}_s}}{m-1}$. In particular, $\mathbf{1}$ is an eigenfunction associated to the eigenvalue $-\frac{R_{\overline{g}_s}}{m-1}$ of the operator $\mathcal{J}_s$. Hence, $\beta_s \backslash \mathbf{1}$ is a basis for $\mathcal{H}_s$ formed by eigenfunctions of $\mathcal{J}_s$. 
	
	Once the eigenvalues of $\Delta_{\overline{g}_s}$ are all real, non-negative, with finite multiplicity and the spectrum of $\mathcal{J}_s$ is given by 
	$$\Sigma(\mathcal{J}_s) = \left\{ \lambda - \displaystyle\frac{R_{\overline{g}_s}}{m-1} : \lambda \in \Sigma(\Delta_{\overline{g}_s}) \right\},$$
	it follows that $\mathcal{J}_s$ has finitely many negative eigenvalues, which implies that $(\mbox{d}F_s)_{\mathbf{1}_s}$ has finitely many negative eigenvalues.
	
	We already proved that the eigenspaces $V_{\rho, s}$ of the Laplacian $\Delta_{\overline{g}_s}$ are $G$-invariants. It is easy to see that, $V_{\rho, s}$ as eigenspace of $\mathcal{J}_s : T_{\mathbf{1}} \mathcal{D}_s^0 \longrightarrow \mathcal{E}_s^0$ or, equivalently, $(\mbox{d}F_s)_{\mathbf{1}_s}$ are both $G$-invariants.
	
	Note that $(\mbox{d}F_a)_{\mathbf{1}_a}$ is an isomorphism from $T_{\mathbf{1}_s}\mathcal{D}^0_s$ into $\mathcal{E}^0_s$ because, by hypothesis, $R_{g_a} = 0$ or $\frac{R_{g_a}}{m-1}$ is not an eigenvalue of $\Delta_{g_a}$, which implies that $(\mbox{d}F_a)_{\mathbf{1}_a}$ is injective. Now, we saw that $(\mbox{d}F_a)_{\mathbf{1}_a}$ is Fredholm of incex zero, hence injection implies surjection.  The same is valid for $(\mbox{d}F_b)_{\mathbf{1}_b}$.
	
	By hypothesis, $\pi^-_a$ e $\pi^-_b$ are non-equivalents. Under that conditions, \cite[Theorem~A.3]{Lima} ensures the existence of a bifurcation instant $s_* \in (a,b)$ for the family of solutions, $s \mapsto \phi_s \in \mathcal{D}^0_s$, of the equation 
	\begin{equation}\label{eq:sol}
		F( \ \cdot \ , s) = (\mathbf{0}_s, s).
	\end{equation}
	It means, first, that $s_* \mapsto \phi_{s_*} \in \mathcal{D}^0_{s_*}$ belongs to the family of solutions of \eqref{eq:sol}, besides that, the existence of the bifurcation instant $s_*$, implies the existence of
	\begin{itemize}
		\item[(a.)] a sequence $s_n \in [a,b]$, with $s_n \rightarrow s_*$; i.e., there exists a sequence of functions $\phi_{s_n} \in \mathcal{D}^0_{s_n}$ that are solutions of the equation \eqref{eq:sol}, such that $\phi_{s_n} \rightarrow \phi_{s_*}$;
		\item[(b.)] a sequence of solutions of \eqref{eq:sol} with $\phi_n \in \mathcal{D}^0_{s_n}$, $\phi_n \rightarrow \phi_{s_*}$ and $\phi_n \neq \phi_{s_n}$, for all $n \in \mathbb{N}$.
	\end{itemize}
	This is equivalent to say that $s_*$ is a bifurcation instant for the family of solutions of the Yamabe problem in manifolds with minimal boundary $s \mapsto \overline{g}_s \in [\overline{g}_s]^0_1$, in the sense of Definition~\ref{def:bif}.	
\end{proof}

\section{Hamonically free actions} \label{sec:harm}

Let $(M_1, g^{(1)})$ be a compact Riemannian manifold, with $\partial M_{1} = \emptyset$ and constant scalar curvature, and let $(M_{2}, \overline{g}^{(2)})$ be a compact Riemannian manifold with minimal boundary and constant scalar curvature. Consider the product manifold, $M = M_{1} \times M_{2}$, which boundary is given by \linebreak $\partial M = M_{1} \times \partial M_{2}$. Let $m_{1}$ and $m_{2}$ be the dimensions of $M_{1}$ and $M_{2}$, respectively, and assume that $\mbox{dim}(M) = m = m_{1} + m_{2} \geq 3$. For each $s \in (0, + \infty)$ define ${\overline{g}}_{s} = {g}^{(1)} \oplus s {\overline{g}}^{(2)}$ a family of metrics in $M$.  Then, $\left\{ {\overline{g}}_{s} \right\}_{s}$ is a family of metrics of constant scalar curvature and minimal boundary in $\mathcal{M}^{k, \alpha}(M)$, that is, normalizing $\overline{g}_s$ in such a way that it has volume $1$, for all $s>0$, we have a family of critical points of the Hilbert-Einstein functional restricted to $[\overline{g}]^0_1$, so a family of solutions for the Yamabe problem in manifolds with boundary.

As we saw in the preceding section, a necessary condition for an instant $s$ to be a bifurcation instant for the family $\{\overline{g}_s\}$ is that $\overline{g}_s$ is a degenerate metric. In Subsection \ref{sec:HEfunc} we saw that a metric is degenerate if and only if $\frac{R_{\overline{g}}}{m-1}$ is a non-null eigenvalue of $\Delta_{\overline{g}_s}$, with Neumann boundary condition. It is equivalent to say that zero is an eigenvalue of the operator
$$\mathcal{J}_{s} = \Delta_{{\overline{g}}_{s}} - \displaystyle\frac{R_{{\overline{g}}_{s}}}{m-1},$$
defined in $T_{\overline{g}}[\overline{g}]^0_1 = \left\{ \psi \in \mathcal{C}^{k, \alpha}(M)^0 : \displaystyle\int_{M} \psi \ \omega_{\overline{g}_s} = 0 \right\}$, satisfying the Neumann boundary conditions.

Let $\Delta_{{g}^{(1)}}$ be the Laplace operator on $M_1$, which eigenvalues are denoted by $0 = \rho_{0}^{(1)} < \rho_{1}^{(1)} < \rho_{2}^{(1)} < \ldots $, with geometric multiplicity $\mu_{i}^{(1)}$, $i \geq 0$, and let $\Delta_{{\overline{g}}^{(2)}}$ be the Laplace operator of $M_2$ which eigenvalues, subjected to Neumann boundary condition, 
\begin{equation}
\left\{
\begin{array}{rcl}
\Delta_{{\overline{g}}^{(2)}} f^{(2)} & = & \rho_{j}^{(2)} f^{(2)},\mbox{ on $ M$,} \\
\partial_{\eta_2} f^{(2)} & = & 0, \mbox{ on $\partial M$},
\end{array}
\right.
\label{EP2}
\end{equation}
are $0 = \rho_{0}^{(2)} < \rho_{1}^{(2)} < \rho_{2}^{(2)} < \ldots $,where $j \geq 0$, with $\mu_{j}^{(2)}$ the geometric multiplicity of $\rho_{j}^{(2)}$, $j  \geq 0$. Recall that, in the product manifold $M$, the Laplacian is given by
$$\Delta_{\overline{g}_s} = \Delta_{g^{(1)}} \otimes I + \displaystyle\frac{1}{s} \left( I \otimes \Delta_{\overline{g}^{(2)}}\right)$$
Then, the spectrum of $\mathcal{J}_{s}$ is given by
$$\Sigma(\mathcal{J}_{s}) = \left\{ \sigma_{i,j} : i, j \geq 0, i + j > 0 \right\},$$
where
$$\sigma_{i,j}(s) = \rho_{i}^{(1)} + \displaystyle\frac{1}{s} \rho_{j}^{(2)} - \displaystyle\frac{1}{m-1} \left( R_{{g}^{(1)}} + \displaystyle\frac{1}{s} R_{{\overline{g}}^{(2)}} \right)$$
are eigenvalues of $J_{s}$, with Neumann boundary condition on $\partial M$, with geometric multiplicity equal to the product $\mu_{i}^{(1)} \mu_{j}^{(2)}$. Besides that, all the eigenvalues of $J_s$ are of this form (see Subsection \ref{sec:laplacian}). It is worth verifying that $\sigma_{i,j}(s)$ is a solution of the Neumann problem
\begin{equation} \label{eq:EJ}
\left\{
\begin{array}{rcl}
\mathcal{J}_s f & = & \sigma_{i,j}(s) f,\mbox{ on $ M$,} \\
\partial_{\eta_s} f & = & 0, \mbox{ on $\partial M$}.
\end{array}
\right.
\end{equation}
Let $f^{(1)}_i$ be an eigenfunction of $\Delta_{g^{(1)}}$ associated to the eigenvalue $\rho^{(1)}_i$ and let $f^{(2)}_j$ be an eigenfunction associated to $\rho^{(2)}_j$, solution of the problem \eqref{EP2}, then $f = f^{(1)}_i \otimes f^{(2)}_j$ is an eigenfunction of $\mathcal{J}_s$, associated to the eigenvalue $\sigma_{i,j}(s)$ and satisfies \eqref{eq:EJ}. Indeed,
\begin{align*}
\mathcal{J}_s f 
&= \left( \Delta_{{\overline{g}}_{s}} - \displaystyle\frac{R_{{\overline{g}}_{s}}}{m-1} \right) f \\
&= \Delta_{g^{(1)}} \otimes I + \displaystyle\frac{1}{s} \left( I \otimes \Delta_{\overline{g}}^{(2)} \right) \left(f_i^{(1)} \otimes f_j^{(2)} \right) - \left( \displaystyle\frac{R_{{\overline{g}}_{s}}}{m-1} \right) \left(f_i^{(1)} \otimes f_j^{(2)}\right) \\
&= \Delta_{g^{(1)}} f_i^{(1)} \otimes f_j^{(2)} + \displaystyle\frac{1}{s} \left( f_i^{(1)} \otimes   \Delta_{\overline{g}}^{(2)} f_j^{(2)} \right) - \left( \displaystyle\frac{R_{{\overline{g}}_{s}}}{m-1} \right) \left(f_i^{(1)} \otimes f_j^{(2)}\right) \\
&= \rho_i^{(1)} f_i^{(1)} \otimes f_j^{(2)} + \displaystyle\frac{1}{s}  f_i^{(1)} \otimes \left( \rho_j^{(2)} f_j^{(2)} \right) - \left( \displaystyle\frac{R_{{\overline{g}}_{s}}}{m-1} \right) \left(f_i^{(1)} \otimes f_j^{(2)}\right) \\
&= \rho_i^{(1)} \left( f_i^{(1)} \otimes f_j^{(2)} \right) +  \displaystyle\frac{1}{s}  \rho_j^{(2)} \left( f_i^{(1)} \otimes  f_j^{(2)} \right) -  \displaystyle\frac{1}{m-1} \left( R_{{g}^{(1)}} + \displaystyle\frac{1}{s} R_{{\overline{g}}^{(2)}} \right) \left(f_i^{(1)} \otimes f_j^{(2)}\right) \\
& = \left( \rho_{i}^{(1)} + \displaystyle\frac{1}{s} \rho_{j}^{(2)} \right)  (f_i^{(1)} \otimes f_j^{(2)})  - \displaystyle\frac{1}{m-1} \left( R_{{g}^{(1)}} + \displaystyle\frac{1}{s} R_{{\overline{g}}^{(2)}} \right) (f_i^{(1)} \otimes f_j^{(2)}) \\
&= \left[ \rho_{i}^{(1)} + \displaystyle\frac{1}{s} \rho_{j}^{(2)} - \displaystyle\frac{1}{m-1} \left( R_{{g}^{(1)}} + \displaystyle\frac{1}{s} R_{{\overline{g}}^{(2)}} \right) \right] \left( f^{(1)}_i \otimes f^{(2)}_j \right) \\
&= \sigma_{i,j}(s) f
\end{align*}
and, we have
\begin{align*}
\partial_{\eta_s} f
&= \eta_s \left( f^{(1)}_i \otimes f^{(2)}_j \right) 
= \left( 0 + \frac{1}{\sqrt{s}} \eta_{\overline{g}^{(2)}} \right) \left( f^{(1)}_i \otimes f^{(2)}_j \right) \\
&= \frac{1}{\sqrt{s}} \eta_{\overline{g}^{(2)}} f^{(2)}_j 
= \frac{1}{\sqrt{s}}  \partial_{\eta_s} f^{(2)}_j 
= 0
\end{align*}
Despite the eigenvalues $\rho_i^{(1)}$, $\rho^{(2)}_j$ are all distinct, the eigenvalues $\sigma_{i,j}(s)$ of $\mathcal{J}_s$ are not necessarily distinct.

\begin{definition}\label{def:degen}
	{\rm Let $i_{*}$ and $j_{*}$ be the smallest non-negative integers that satisfy
		$$
		\rho_{i_{*}}^{(1)} \geq \displaystyle\frac{R_{{g}^{(1)}}}{m-1}, \ \ \ \rho_{j_{*}}^{(2)} \geq \displaystyle\frac{R_{{\overline{g}}^{(2)}}}{m-1}.
		$$
		We say that the pair of metrics $({g}^{(1)}, {\overline{g}}^{(2)})$ is {\bf degenerate} if equalities hold in both cases, that is, $\sigma_{i_{*}, j_{*}}(s) = 0$, for all $s$. In this situation, the operator $\mathcal{J}_{s}$ is also called degenerate.}
\end{definition}

In \cite{Diaz}, we prove that, if the pair $({g}^{(1)}, {\overline{g}}^{(2)})$ is non-degenerate, and $R_{{g}^{(1)}}, R_{{\overline{g}}^{(2)}} > 0$, with $H_{{\overline{g}}^{(2)}}=0$, then the functions $\sigma_{i,j}(s)$ are constant, strictly increasing or strictly decreasing, besides that, we prove that the zeros of these functions form two sequences of degeneracy instants (for whose the operator $\mathcal{J}_s$ is singular), $(s_{n}^{(1)})_{n} \rightarrow 0$ and $(s_{n}^{(2)})_{n} \rightarrow \infty$; anywhere else $\mathcal{J}_s$ is an isomorphism (and the family is rigid). Except for a finite number of these degeneracy instants, $s_{1}^{(2)} < s < s_{1}^{(1)}$, we prove that there is a jump in the Morse index of the operator $\mathcal{J}_s$ as it varies from $s -\varepsilon$ to $s +\varepsilon$, for $\varepsilon > 0$, where $\mathcal{J}_{s -\varepsilon}$ and $\mathcal{J}_{s +\varepsilon}$ are non-singular. In the case of \emph{neutral instants} the same instant $s_{1}^{(2)} < s < s_{1}^{(1)}$ can be a zero of a strictly increasing eigenvalue and of a strictly decreasing eigenvalue, that is,  the dimensions of the negative eigenspaces, $V_{s-\varepsilon}^-$ and $V_{s+\varepsilon}^-$, may be equal, making impossible to ensure that a jump in the Morse index occurs. In this paper we use a more accurate analysis, considering an action of a Lie group $G$ on the product manifold $M$ and examining the non-equivalence of the representations of $G$ in these eigenspaces.

The construction discussed at Section \ref{sec:iso_rep}, allows us to present the following definition.

\begin{definition}\label{def:harm}
	Let $G$ be a Lie group isometric action on a Riemannian manifold $(N,h)$. We say that the action of $G$ is harmonically free if, given an arbitrary family of eigenspaces of the Laplacian $\Delta_h$, pairwise distinct,
	$$V_1, V_2, \ldots, V_r, V_{r+1}, \ldots, V_{r+s}, \ \ \ r, s \geq 1,$$
	and integers $n_{\ell} \geq 0$, with $\ell = 1, \ldots, r+s$, not all zero, then the (anti-)representations
	$$\bigoplus_{\ell = 1}^{r}  n_{\ell} \cdot \pi_{\ell} \ \ \mbox{ e }  \ \ \bigoplus_{\ell=r+1}^{r+s} n_{\ell} \cdot \pi_{\ell},$$
	are non-equivalents. Here, we denote by $\pi_{\ell}$ the (anti-)representation of $G$ in $V_{\ell}$, defined in Proposition \ref{prop:repres}, and $n_{\ell} \cdot\pi_{\ell}$ denotes the (external) direct sum of $n_\ell$ copies of $\pi_{\ell}$.
\end{definition}

Now we present some concrete examples of this action.

\begin{example}
	The sphere $\mathbb{S}^n$ embedded in $\mathbb{R}^{n+1}$. The isometry group of $\mathbb{R}^{n+1}$ is the orthogonal group $O(n+1)$. Consider $\mathbb{S}^n$ with the induced euclidean metric in $\mathbb{R}^{n+1}$, then, the subgroup $SO(n+1)$, of $O(n+1)$, acts in $\mathbb{S}^n$ by isometries; on the other hand, any isometry of $\mathbb{S}^n$ can be extended to an isometry of $\mathbb{R}^{n+1}$, in a way that $O(n+1)$ is the isometry group of the sphere of dimension $n$. The action of $O(n+1)$ in the sphere is transitive (which implies that $\mathbb{S}^n$ is homogeneous). 
	We can consider the action of $G_0=SO(n+1)$ on $\mathbb{S}^n$, which is transitive too. Now, the isotropy subgroup of a point $p \in \mathbb{S}^{n}$, denoted by $G_0(p)$, is isomorphic to $SO(n) \times \{1\}$. Indeed, note that the isotropy subgroup of $e_{n+1}$ is given by
	$$
	G_0(e_{n+1}) = \left\{ 
	\overline{A} = \left( 
	\begin{array}{cc} 
	A & 0 \\ 
	O & 1 
	\end{array} 
	\right) : A \in SO(n), O \mbox{ é a matriz nula } n \times n \right\},
	$$
	which is isomorphic to $SO(n) \times \{1\}$. 
	Then, $\mathbb{S}^n$ is isotropic for all $p$, besides that, $\mathbb{S}^{n}$ is diffeomorphic to $SO(n+1)/SO(n)$. The representations of $SO(n+1)$ in the eigenspaces of the Laplacian of $\mathbb{S}^n$ are irreducible (this is a consequence of a well-known result that we present below as the Proposition \ref{prop:Berger}). The eigenvalues of the Laplacian of $\mathbb{S}^n$, $n \geq 2$, are known e given by $\lambda = k(k+n-1)$, with $k=0,1,2,...$ and the dimensions of the correspondents eigenspaces, given by $\mbox{dim}V_{\lambda} = \binom{n+k}{n} - \binom{n+(k-2)}{n}$, form an strictly increasing sequence \cite[Cap.\ III, \S 22]{Shubin}, so the eigenspaces are non-equivalent\footnote{When $n=1$, it is known that the representations of the Laplacian of $\mathbb{S}^1$ in the eigenspaces of $\Delta$ are pairwise equivalent.}. So, the action of $SO(n+1)$ on $\mathbb{S}^n$ is an example of harmonically free action.
\end{example}

We can generalize the sphere case in the next example.
\begin{example}
	Let $(N,h)$ be a compact Riemannian manifold. When the eigenspaces of the Laplacian $\Delta_h$ are irreducible and pairwise non-equivalent, the action of the isometry group $Iso(N,h)$ in $N$ is harmonically free.
\end{example}

\begin{example} 
	The next result (\cite[Proposition C.I.8]{Berger}), allows us to exhibit an important class of examples of harmonically free actions. Let $G$ be a compact (connected) Lie group, $H$ be a closed subgroup of $G$ and $M = G/H$ be the correspondent homogeneous space with a Riemannian metric $g$, which is $G$-invariant. Let $\Delta_g$ be the Laplacian of $g$ in $M$ and denote by $V_{\lambda}$, the eigenspaces of $\Delta_g$ associated to the eigenvalues $\lambda$. We know that $V_{\lambda}$ have finite dimensions, for all $\lambda$ and are invariants by the action of $G$. Consider the connected component of identity in $H$, denoted by $H_0$. Remember that $H$ acts in $T_pM$, where $p$ is the base point of $G/H$, via isotropy linear representation
	\begin{align*}
	Is_p :
	& H \longrightarrow GL(T_pM) \\
	& h \longmapsto (\mbox{d}h)_p.
	\end{align*} 
	\begin{proposition}\label{prop:Berger}
		If the action of $H_0$ is transitive on the unitary sphere of $T_pM$, then the representations
		$$\rho_{\lambda} : G \longrightarrow GL(V_{\lambda}),$$
		are irreducible, for all $\lambda$.
	\end{proposition}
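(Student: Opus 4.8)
The plan is to show that, for each $\lambda$, the eigenspace $V_{\lambda}\subset C^{\infty}(M)\subset L^{2}(M)$ admits no nontrivial decomposition into $G$-invariant subspaces, and to funnel the whole argument through the single numerical estimate $\dim V_{\lambda}^{H}\le 1$, where $V_{\lambda}^{H}$ denotes the subspace of $H$-invariant functions in $V_{\lambda}$. Since the metric $g$ is $G$-invariant, the associated volume form and hence the $L^{2}(M)$ inner product are $G$-invariant, so $\rho_{\lambda}$ is an orthogonal representation and $V_{\lambda}$ splits orthogonally into $G$-irreducible pieces.

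First I would reduce irreducibility to the estimate $\dim V_{\lambda}^{H}\le 1$. For this, the claim is that every nonzero $G$-invariant subspace $W\subseteq V_{\lambda}$ contains a nonzero $H$-invariant function. Indeed, the evaluation functional $L\colon W\to\mathbb{R}$, $L(f)=f(p)$, is nonzero: if $f(p)=0$ for all $f\in W$, then writing an arbitrary point as $x=g\cdot p$ and using $f(g\cdot p)=(g^{-1}\cdot f)(p)=0$ together with the $G$-invariance of $W$, every $f\in W$ would vanish identically. Moreover $L$ is $H$-invariant because $h\cdot p=p$ for $h\in H$. Representing $L$ through the $G$-invariant inner product by a vector $w_{0}\in W$ produces a nonzero $w_{0}\in W^{H}$. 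Consequently, if some $V_{\lambda}$ were reducible, say $V_{\lambda}=W_{1}\oplus W_{2}$ with both summands nonzero and $G$-invariant, we would obtain two linearly independent $H$-invariant functions, contradicting $\dim V_{\lambda}^{H}\le 1$.

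Next I would prove the estimate itself, and this is where the hypothesis on $H_{0}$ enters. The key point is that $H$-invariant functions on $M$ are radial, i.e.\ depend only on the geodesic distance $d(p,\cdot)$. Since each element of $H_{0}$ fixes $p$ and acts by isometries, its differential at $p$ intertwines the exponential map, so $\exp_{p}$ is $H_{0}$-equivariant; as the linear isotropy action commutes with scaling in $T_{p}M$, transitivity of $H_{0}$ on the unit sphere upgrades to transitivity on every sphere of radius $r$, and hence, via $\exp_{p}$, to transitivity of $H_{0}$ on each geodesic sphere about $p$ inside a normal ball. An $H$-invariant function is therefore constant on these geodesic spheres, i.e.\ radial near $p$. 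A radial eigenfunction of $\Delta_{g}$ satisfies the radial part of the eigenvalue equation, a second order linear ODE in $r$, whose solutions remaining regular at $p$ form a one-dimensional space. Thus any two functions of $V_{\lambda}^{H}$ agree up to a scalar on a normal ball; since eigenfunctions are real-analytic, unique continuation forces them to be proportional on all of $M$, giving $\dim V_{\lambda}^{H}\le 1$.

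Combining the two steps yields irreducibility of $\rho_{\lambda}$ for every $\lambda$. The main obstacle is the radial reduction: one must carefully pass from transitivity on the unit sphere of $T_{p}M$ to transitivity on geodesic spheres (via equivariance of $\exp_{p}$ and homogeneity in the radial direction), and then promote the local conclusion to all of $M$ using analyticity and unique continuation of eigenfunctions, rather than assuming the geodesic spheres are globally homogeneous up to the cut locus. The representation-theoretic reduction in the first step, by contrast, is formal and robust.
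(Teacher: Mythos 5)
Your argument is correct, but note that the paper does not prove this proposition at all: it is quoted verbatim from Berger--Gauduchon--Mazet (Proposition C.I.8 of \emph{Le Spectre d'une Vari\'et\'e Riemannienne}), so there is no in-paper proof to compare against. What you wrote is essentially the classical argument from that reference: the Frobenius-reciprocity-style observation that every nonzero $G$-invariant subspace of $V_{\lambda}\subset L^{2}(G/H)$ contains a nonzero $H$-fixed vector (via the evaluation functional at the base point), combined with the bound $\dim V_{\lambda}^{H}\le 1$ obtained by showing $H$-invariant eigenfunctions are radial and satisfy a second-order ODE with a one-dimensional space of solutions regular at $p$. The two steps fit together correctly; the only point worth making explicit is that transitivity of $H_{0}$ on geodesic spheres forces the volume density in geodesic polar coordinates to depend on $r$ alone, which is exactly what makes the radial part of the eigenvalue equation a genuine ODE, and that unique continuation (Aronszajn) extends proportionality from a normal ball to all of the connected manifold $M$ without needing global homogeneity of the geodesic spheres.
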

	Concretely talking, this result applies to compact symmetric spaces of rank $1$.	
	\begin{corol}
		The natural action of the isometry group of a compact symmetric space of rank one is harmonically free.
	\end{corol}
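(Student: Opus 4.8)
The plan is to derive the statement from Proposition \ref{prop:Berger} together with the multiplicity-free structure of $L^2(G/H)$ for a symmetric pair, and then to feed irreducibility and pairwise non-equivalence of the eigenspace representations into Definition \ref{def:harm}.

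First I would recall the geometric characterization of rank one: a compact Riemannian symmetric space $M = G/H$ has rank one if and only if the identity component $H_0$ of the isotropy group acts transitively on the unit sphere of the tangent space $T_pM$ at the base point $p$ (these are precisely the two-point homogeneous spaces, namely $\mathbb{S}^n$, $\mathbb{R}P^n$, $\mathbb{C}P^n$, $\mathbb{H}P^n$ and $\mathbb{P}^2(Ca)$, all of dimension $\geq 2$). With this hypothesis verified, Proposition \ref{prop:Berger} applies verbatim and shows that the representation $\rho_\lambda : G \to GL(V_\lambda)$ on each eigenspace of $\Delta_g$ is irreducible.

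The heart of the argument, and the step I expect to be the main obstacle, is to upgrade irreducibility to pairwise non-equivalence of the $\rho_\lambda$ for distinct eigenvalues $\lambda$; this is genuinely needed, since harmonic freeness already fails if two distinct eigenspaces carry equivalent representations (take unit multiplicities, one from each group in Definition \ref{def:harm}). Here I would invoke the fact that $(G,H)$ is a Gelfand pair for a symmetric space, so that the Peter--Weyl decomposition of $L^2(G/H)$ is multiplicity-free: each irreducible representation of $G$ occurs at most once. Since $\Delta_g$ is $G$-invariant, each eigenspace is a $G$-submodule, and by the previous step each is irreducible; were $\rho_\lambda \cong \rho_\mu$ for $\lambda \neq \mu$, a single irreducible of $G$ would occur (at least) twice in $L^2(G/H)$, contradicting multiplicity-freeness. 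Hence distinct eigenspaces carry pairwise non-equivalent irreducible representations. As a more computational alternative, one may instead run through the five families above using their explicitly known Laplace spectra, whose eigenspace dimensions are strictly increasing, so that distinct eigenvalues force distinct dimensions and hence inequivalent representations, exactly as in the example of $\mathbb{S}^n$.

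Finally I would assemble the conclusion. Given pairwise distinct eigenspaces $V_1, \dots, V_{r+s}$ with irreducible, pairwise non-equivalent representations $\pi_1, \dots, \pi_{r+s}$, and integers $n_\ell \geq 0$ not all zero, the representations $\bigoplus_{\ell=1}^{r} n_\ell \cdot \pi_\ell$ and $\bigoplus_{\ell=r+1}^{r+s} n_\ell \cdot \pi_\ell$ have, by uniqueness of the isotypic decomposition (Schur's lemma), isotypic constituents lying in the disjoint families $\{\pi_1, \dots, \pi_r\}$ and $\{\pi_{r+1}, \dots, \pi_{r+s}\}$ of pairwise non-equivalent irreducibles. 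An equivalence between the two sums would force every multiplicity $n_\ell$ to vanish, contradicting the hypothesis; therefore the two sums are non-equivalent and, by Definition \ref{def:harm}, the action is harmonically free.
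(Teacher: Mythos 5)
Your proposal is correct and follows the same skeleton as the paper: rank one is translated into transitivity of the isotropy action on the unit tangent sphere, Proposition~\ref{prop:Berger} then gives irreducibility of each eigenspace representation, pairwise non-equivalence of distinct eigenspace representations is established, and harmonic freeness follows by comparing isotypic decompositions. The one step where you genuinely diverge is the non-equivalence argument: the paper disposes of it in a single sentence by observing that the eigenspace dimensions of a compact rank one symmetric space form a strictly increasing sequence, so distinct eigenspaces cannot carry equivalent representations; you instead make the Gelfand-pair/multiplicity-free structure of $L^2(G/H)$ your primary argument and relegate the dimension count to a computational alternative. Your route is more conceptual and more robust --- it needs no explicit knowledge of the spectra and would survive in situations where dimensions happen to coincide --- while the paper's route is more elementary but leans on case-by-case spectral data for the five families. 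You also spell out two things the paper leaves implicit: the final Schur-lemma/isotypic-decomposition step (which in the paper is only contained in the earlier general example about irreducible, pairwise non-equivalent eigenspace representations), and the exclusion of the one-dimensional case, which the paper only flags in a footnote about $\mathbb{S}^1$. Both additions are welcome; I see no gap.
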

	Moreover, in these cases, the dimensions of the eigenspaces of the Laplacian of $M$, with respect to a $G$-invariant metric, forms a strictly increasing sequence, which implies that the eigenspaces are pairwise non-equivalent.
\end{example}

\section{Equivariant bifurcation of solutions of the Yamabe problem}\label{sec:main}

Now we are ready to present our main result. Let $(M_1, g^{(1)})$ be a compact Riemannian manifold with positive constant scalar curvature and let $(M_2, \overline{g}^{(2)})$ be a compact Riemannian manifold with boundary $\partial M \neq \emptyset$, positive constant scalar curvature and vanishing mean curvature of the boundary. Assume that the pair $(g^{(1)}, \overline{g}^{(2)})$ is non-degenerate. Consider the product manifold $M = M^{(1)} \times M^{(2)}$, of dimension $m \geq 3$ and boundary $\partial M = M^{(1)} \times \partial M^{(2)} \neq \emptyset$. Let $\overline{g}_s = g^{(1)} \oplus s \overline{g}^{(2)}$, with $s>0$, be a family of metrics in $M$. In fact, $\{\overline{g}_s\}_{s>0}$ is a family of metrics of constant scalar curvature and vanishing mean curvature of the boundary $\partial M$.  Let $G$ be a connected Lie group action by isometries on $M_1$.

\begin{theorem}\label{thm:maineq}
 If the action of $G$ on $M_1$ is harmonically free, then every degenerate instant for the family of operators $\left\{\mathcal{J}_s \right\}_{s>0}$ is an instant of bifurcation for the family $\{\overline{g}_s\}_{s>0}$ of solutions of the Yamabe problem in the compact manifold $M$ with minimal boundary.
\end{theorem}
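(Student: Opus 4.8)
Since $G$ acts isometrically on $(M_1,g^{(1)})$, it acts on the product $M=M_1\times M_2$ by $g\cdot(x,y)=(g\cdot x,\,y)$, and this extended action fixes $\overline{g}_s=g^{(1)}\oplus s\,\overline{g}^{(2)}$ for every $s>0$, so $G\subset\bigcap_{s}\mathrm{Iso}(M,\overline{g}_s)$ and the setting of Theorem~\ref{thm:bifeq} applies. The plan is to realise the given degenerate instant $s_*$ as the bifurcation instant produced by Theorem~\ref{thm:bifeq} on a small interval $[a,b]=[s_*-\varepsilon,\,s_*+\varepsilon]$. First I would choose $\varepsilon>0$ so small that $[a,b]\subset(0,\infty)$, that $s_*$ is the \emph{only} degeneracy instant in $[a,b]$, and that $\mathcal{J}_a$ and $\mathcal{J}_b$ are isomorphisms; this is possible because the degeneracy instants are the zeros of the countably many monotone functions $\sigma_{i,j}$ and hence form a discrete subset of $(0,\infty)$. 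As $\overline{g}_a$ and $\overline{g}_b$ are then non-degenerate, Proposition~\ref{thm:rigidity} shows that $\{\overline{g}_s\}$ is locally rigid at $a$ and at $b$, which is the first hypothesis of Theorem~\ref{thm:bifeq}.

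The heart of the matter is to verify that $\pi^-_a$ and $\pi^-_b$ are non-equivalent. Put $A_i=\rho_i^{(1)}-\tfrac{R_{g^{(1)}}}{m-1}$ and $B_j=\rho_j^{(2)}-\tfrac{R_{\overline{g}^{(2)}}}{m-1}$, so that $\sigma_{i,j}(s)=A_i+\tfrac{1}{s}B_j$; thus $\sigma_{i,j}$ is strictly decreasing if $B_j>0$, strictly increasing if $B_j<0$, and constant if $B_j=0$. I would first observe that every pair $(i,j)$ with $\sigma_{i,j}(s_*)=0$ has $B_j\neq 0$: if $B_j=0$ then also $A_i=0$, meaning $\tfrac{R_{g^{(1)}}}{m-1}$ and $\tfrac{R_{\overline{g}^{(2)}}}{m-1}$ would both be eigenvalues, which by Definition~\ref{def:degen} would make the pair $(g^{(1)},\overline{g}^{(2)})$ degenerate, against our hypothesis. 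Hence each eigenvalue vanishing at $s_*$ is strictly monotone and genuinely changes sign there.

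Since $s_*$ is the only degeneracy instant in $[a,b]$, every $\sigma_{i,j}$ keeps a constant sign on $[a,s_*)$ and on $(s_*,b]$, so $V^-_a$ and $V^-_b$ agree on the eigenspaces that stay negative across $[a,b]$ and differ only through the eigenvalues crossing zero at $s_*$. The eigenspace of $\sigma_{i,j}$ is $V^{(1)}_i\otimes V^{(2)}_j$, and because $G$ acts on the $M_1$-factor alone, the representation it carries is $\pi_i\otimes\mathrm{triv}$, that is, $\mu^{(2)}_j$ copies of the representation $\pi_i$ on $V^{(1)}_i$. The decreasing eigenvalues ($B_j>0$) pass from positive to negative and so contribute to $V^-_b$, while the increasing ones ($B_j<0$) contribute to $V^-_a$; consequently the summands distinguishing $\pi^-_a$ from $\pi^-_b$ are
$$\bigoplus_i n_i^-\cdot\pi_i \qquad\text{and}\qquad \bigoplus_i n_i^+\cdot\pi_i,$$
where $n_i^-=\sum\mu^{(2)}_j$ taken over $\{j:B_j<0,\ \sigma_{i,j}(s_*)=0\}$ and $n_i^+=\sum\mu^{(2)}_j$ taken over $\{j:B_j>0,\ \sigma_{i,j}(s_*)=0\}$.

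The decisive step, and the one I expect to be the main obstacle, is to check that the index sets $\{i:n_i^->0\}$ and $\{i:n_i^+>0\}$ are disjoint, so that the two representations above involve pairwise distinct eigenspaces of $\Delta_{g^{(1)}}$. This holds because $\sigma_{i,j}(s_*)=0$ forces $A_i=-\tfrac{1}{s_*}B_j$, so $A_i$ and $B_j$ have opposite signs; a fixed $i$ therefore fixes the sign of $A_i$ and cannot be paired both with some $B_j>0$ and with some $B_{j'}<0$. Because $s_*$ is a degeneracy instant, at least one $n_i$ is nonzero. If exactly one of the two families is nonempty, the two negative eigenspaces have different dimensions and $\pi^-_a,\pi^-_b$ are non-equivalent at once; if both are nonempty, the disjointness makes the configuration precisely the one in Definition~\ref{def:harm}, and the harmonic freedom of the $G$-action on $M_1$ forces $\bigoplus_i n_i^-\pi_i$ and $\bigoplus_i n_i^+\pi_i$ to be non-equivalent. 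In either case $\pi^-_a$ and $\pi^-_b$ are non-equivalent, both hypotheses of Theorem~\ref{thm:bifeq} hold, and it yields a bifurcation instant in $(a,b)$; since $s_*$ is the unique degeneracy instant there and every bifurcation instant is a degeneracy instant, this instant must be $s_*$.
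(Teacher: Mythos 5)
Your proof is correct and follows essentially the same route as the paper's: extend the $G$-action trivially to the $M_2$-factor, localize around $s_*$, identify the summands distinguishing $V^-_{s_*-\varepsilon}$ from $V^-_{s_*+\varepsilon}$ as multiples of the representations $\pi_i^{(1)}$ indexed by disjoint families of eigenspaces of $\Delta_{g^{(1)}}$, and conclude via harmonic freeness and Theorem~\ref{thm:bifeq}. Your sign argument ($A_i=-\tfrac{1}{s_*}B_j$ forces the monotonicity type of every eigenvalue vanishing at $s_*$ to be determined by $i$) is a minor variant of the paper's observation that each $i$ admits a unique $j$ with $\sigma_{i,j}(s_*)=0$, and your explicit treatment of the case where only one of the two families is nonempty (so that the Morse index jumps) is a small but welcome completion.
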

\begin{proof}
	Without loss of generality, suppose that $G$ acts in $M_1$ and define a non-trivial action of $G$ on $M$ as
	$$
	\begin{array}{rcl}
	G \times M & \longrightarrow & M \\
	(\xi, (x_1, x_2)) & \longmapsto & (\xi \cdot x_1, x_2),
	\end{array}
	$$
	where $x_1 \in M_1$ and $x_2 \in M_2$, that is, the action of $G$ on $M_2$ is trivial.
	
	Remember that there is a (symmetric) Laplacian operator,  $\Delta_{g^{(1)}}$, associated to the manifold $M_1$, which has eigenvalues 
	$$0 = \rho^{(1)}_0 < \rho^{(1)}_1 < \rho^{(1)}_2 < \ldots$$
	with finite multiplicities $\mu^{(1)}_i$, with $i = 0, 1, 2, \ldots$, to each of then there is a correspondent eigenspace $V^{(1)}_i$, of finite dimension equals to $\mu^{(1)}_i$, for all $i \geq 0$. As $G$ acts by isometries on $M_1$, we can see the eigenspace of $\rho^{(1)}_i$ as a $G$-space, so $G$ has a natural (anti-)representation in $V^{(1)}_i$,
	$$\pi^{(1)}_i : G \longrightarrow GL(V^{(1)}_i),$$
	given by $\pi^{(1)}_i (\xi)f = f \circ \xi$, $\forall f \in V^{(1)}_i$. 
	
	For the manifold $M_2$,
	$$0 = \rho_{0}^{(2)} < \rho_{1}^{(2)} < \rho_{2}^{(2)} < \ldots $$
	is the sequence of all distinct eigenvalues of the operator $\Delta_{{\overline{g}}^{(2)}}$, subjected to the Neumann boundary condition,
	\begin{equation}
		\left\{
		\begin{array}{rcl}
			\Delta_{{\overline{g}}^{(2)}} f^{(2)} & = & \rho_{j}^{(2)} f^{(2)},\mbox{ on $M^{(2)}$,} \\
			\partial_{\eta_2} f^{(2)} & = & 0, \mbox{ on $\partial M^{(2)}$},
		\end{array}
		\right.
	\end{equation}
	to each of them there is a correspondent eigenspace $V^{(2)}_j$ of finite dimension equals to $\mu^{(2)}_j$, which denotes the geometric multiplicity of $\rho_{j}^{(2)}$, for each $j  \geq 0$. As the action of $G$ on $M_2$ is trivial, the representation of $G$ in the eigenspace $V^{(2)}_j$ is the trivial representation, that is,
	$$\pi^{(2)}_j : G \longrightarrow GL(V^{(2)}_j),$$
	is given by $\pi^{(2)}_j (\xi)f = f \circ id = f$, $\forall f \in V^{(2)}_j$.
	
	In the product manifold $M$, the Laplacian is given by
	$$\Delta_{\overline{g}_s} = \Delta_{g^{(1)}} \otimes I + \displaystyle\frac{1}{s} \left( I \otimes \Delta_{\overline{g}^{(2)}}\right)$$
	and has eigenvalues $\rho_{i,j} =  \rho^{(1)}_i + \displaystyle\frac{1}{s} \rho^{(2)}_j$, with multiplicities $\mu_{i,j} = \mu_i^{(1)} \cdot \mu_j^{(2)}$, which are the dimensions of the respective eigenspaces $V_{i,j} = V_i^{(1)} \otimes V_j^{(2)}$, for all $i,j \geq 0$. If $\Sigma(\Delta_{\overline{g}_s})$ is the spectrum of the Laplacian in the product manifold $(M, \overline{g}_s)$, the spectrum of the operator $\mathcal{J}_s = \Delta_{\overline{g}_s} - \displaystyle\frac{R_{\overline{g}_s}}{m-1}$ is given by
	$$\Sigma(\mathcal{J}_s) = \left\{ \lambda - \displaystyle\frac{R_{\overline{g}_s}}{m-1} : \lambda \in \Sigma(\Delta_{\overline{g}_s}) \backslash \{0\} \right\};$$
	explicitly
	$$\Sigma(\mathcal{J}_s) = \left\{ \sigma_{i,j}(s) : i,j \geq 0, i + j >0 \right\},$$
	where the eigenvalues are given by
	$$\sigma_{i,j}(s) = \left( \rho_i^{(1)} - \displaystyle\frac{R_{g^{(1)}}}{m-1} \right) + \displaystyle\frac{1}{s} \left( \rho_j^{(2)} - \displaystyle\frac{R_{\overline{g}^{(2)}}}{m-1} \right),$$
	and satisfy the Neumann problem in the product manifold with boundary
	\begin{equation}
		\left\{
		\begin{array}{rcl}
			\mathcal{J}_s f & = & \sigma_{i,j} f ,\mbox{ on $ M$,} \\
			\partial_{\eta_s} f  & = & 0, \mbox{ on $\partial M$},
		\end{array}
		\right.
	\end{equation}
	where the function $f = f^{(1)} \otimes f^{(2)}$ is an eigenfunction of $\mathcal{J}_s$ associated to the eigenvalue $\sigma_{i,j}$ if and only if it is eigenfunction of $\Delta_{\overline{g}_s}$ associated to the eigenvalue $\rho_{i,j} \neq 0$, hence $\sigma_{i,j}$ has geometric multiplicity equal to the product $\mu_{i}^{(1)} \cdot \mu_{j}^{(2)}$. 
	
	Now, for each eigenvalue $\rho_{i,j} \neq 0$ of $\Delta_{\overline{g}_s}$ (or, equivalently, for each eigenvalue $\sigma_{i,j}$ of $\mathcal{J}_s$), there exists a representation of $G$ in the eigenspace $V_{i,j} = V_i^{(1)} \otimes V_j^{(2)}$ given by
	$$
	\begin{array}{rcl}
	\pi_{i,j} : G & \longrightarrow & GL(V_{i,j}) \\
	\xi & \longmapsto & \pi_{i,j}(\xi)
	\end{array},
	$$
	where
	$$\pi_{i,j}(\xi)f = \pi_i^{(1)}(\xi) \otimes \pi_j^{(2)}(\xi) f_1 \otimes f_2 = \pi_i^{(1)}(\xi) f_1 \otimes \pi_j^{(2)}(\xi)f_2 = (f_1 \circ \xi) \otimes (f_2 \circ id) = (f_1 \circ \xi) \otimes f_2,$$ 
	for each $g \in G$ e $f \in V_{i,j}$ . 
	
	Let $s_*$ be a degeneracy instant for the family of metrics $\{\overline{g}_s\}_s$, in particular, $s_*$ could be a \emph{neutral instant}. Let $\varepsilon > 0$ be sufficiently small so that $s_*$ be the only degeneracy instant of $\mathcal{J}_{( \cdot )}$ in the interval $[s_*-\varepsilon, s_*+\varepsilon]$, distinctly $\mathcal{J}_{( \cdot )}$ is non-singular at the extremes of this interval. The representations of $G$ on the negative eigenspaces of $\mathcal{J}_{s_* - \varepsilon}$ e $\mathcal{J}_{s_* + \varepsilon}$, denoted by
	$$ \pi_{s_* - \varepsilon}^- : G \longrightarrow GL(V_{s_*-\varepsilon}^-) \ \ \ \ \mbox{e} \ \ \ \  \pi_{s_* + \varepsilon}^- : G \longrightarrow GL(V_{s_*+\varepsilon}^-) ,$$
	respectively, are given by the direct sums of the representations of $G$ on the eigenspaces $V_{i,j}$ that are related to the negative eigenvalues $\sigma_{i,j}( \ \cdot \ )$, of the operator $\mathcal{J}_{(\ \cdot \ )}$ at instants $s_*-\varepsilon$ and $s_*+\varepsilon$. Note that the number of negative eigenvalues and the dimensions of the associated eigenspaces are the same for the operators $J_{s_* - \varepsilon}$ and $J_{s_* + \varepsilon}$, \emph{except} for those that have $s_*$ as a zero. Denote by $V_0$ the direct sum of the eigenspaces associated to eigenvalues that take negative values for all  $s \in [s_*-\varepsilon, s_*+\varepsilon]$.
	
	Let $\sigma_{i,j}$ be an eigenvalue that vanishes at $s_*$. If $\sigma_{i,j}$ is increasing, the eigenspace $V_{i,j}$ takes part in the negative eigenspace of the operator $\mathcal{J}_s$, for all $s<s_*$, but as $s$ varies, passing trough $s_*$, i.e. when $s>s_*$, $V_{i,j}$ is no longer part of the negative eigenspace of $\mathcal{J}_s$. The contrary occurs if $\sigma_{i,j}$ is decreasing; in this case, $V_{i,j}$ is added to the negative eigenspace of $\mathcal{J}_s$, only when $s>s_*$. Thus, we can written the negative eigenspace of the operator $\mathcal{J}_{s_*-\varepsilon}$ as
	$$V_{s_*-\varepsilon}^- = V_0 \oplus \left( \bigoplus_{k=1}^{r} \left(V_{i_k}^{(1)} \otimes V_{j_k}^{(2)}\right) \right),$$
	where $V_{i_k,j_k} = V_{i_k}^{(1)} \otimes V_{j_k}^{(2)}$ is the eigenspaces associated to increasing eigenvalues, such that $\sigma_{i_k, j_k}(s_*) = 0$. Similarly, the negative eigenspace of $\mathcal{J}_{s_*+\varepsilon}$ can be written as
	$$V_{s_*+\varepsilon}^- = V_0 \oplus \left( \bigoplus_{k=r+1}^{r+s} \left(V_{i_k}^{(1)} \otimes V_{j_k}^{(2)}\right) \right),$$
	where $V_{i_k,j_k} = V_{i_k}^{(1)} \otimes V_{j_k}^{(2)}$ is the eigenspace associated to the decreasing eigenvalues, such that $\sigma_{i_k, j_k}(s_*) = 0$.
	
	Note that $\left\{ V_{i_k}^{(1)}\right\}_{k=1}^{r+s}$ and  $\left\{ V_{j_k}^{(2)}\right\}_{k=1}^{r+s}$ are families of eigenspaces of $\Delta_{g^{(1)}}$ and $\Delta_{\overline{g}^{(2)}}$, respectively, pairwise distinct. The pairs $(i_k, j_k)$ are all different, more than that, we have 
	$$(i_p,j_p) \neq (i_q,j_q) \mbox{  e  } \sigma_{i_p,j_p}(s_*) = \sigma_{i_q,j_q}(s_*) = 0 \Longrightarrow i_p \neq i_q \mbox{  e  } j_p \neq j_q;$$
	indeed, if $i_p = i_q$, as the sequence $j \mapsto \rho_j^{(2)}$ is strictly increasing, there is a unique $j_p$ such that $\sigma_{i_p,j_p}(s_*) = 0$, hence $j_p = j_q$. Symmetrically, if $j_p = j_q$, then $i_p = i_q$.
	
	The representation of $G$ on the negative eigenspace $V_{s_*-\varepsilon}^-$ is given by 
	$$\pi_{s_*-\varepsilon}^- = \pi_0 \oplus \left( \bigoplus_{k=1}^{r} \left(\pi_{i_k}^{(1)} \otimes \pi_{j_k}^{(2)}\right) \right),$$
	and the representation of $G$ on the negative eigenspace $V_{s_*+\varepsilon}^-$ is given by 
	$$\pi_{s_*+\varepsilon}^- = \pi_0 \oplus \left( \bigoplus_{k=r+1}^{r+s} \left(\pi_{i_k}^{(1)} \otimes \pi_{j_k}^{(2)}\right) \right),$$
	where $\pi_{j_k}^{(2)}$ is the trivial representation. We know that, if $U$ and $W$ are vector spaces of finite dimensions, then
	$U \cdot {\mbox{dim} W} = \underbrace{U \times U \times \ldots \times U}_{\mbox{dim} W \mbox{ vezes}}$, has dimension ${\mbox{dim} U} \cdot {\mbox{dim} W}$ and is isomorphic to $U \otimes W$, accordingly, 
	$$\bigoplus_{k=1}^{r} V_{i_k}^{(1)} \cdot \mbox{dim}{V_{j_k}^{(2)}}  \simeq \bigoplus_{k=1}^{r} V_{i_k}^{(1)} \otimes V_{j_k}^{(2)}   \ \ \ \mbox{ e } \ \ \ \bigoplus_{k=r+1}^{r+s} V_{i_k}^{(1)} \otimes V_{j_k}^{(2)} \simeq \bigoplus_{k=r+1}^{r+s}  V_{i_k}^{(1)} \cdot \mbox{dim} V_{j_k}^{(2)};$$
	and as the action of $G$ on $M_1$ is harmonically free, it does not exist isomorphism between the spaces
	$$ \bigoplus_{k=1}^{r} V_{i_k}^{(1)} \otimes V_{j_k}^{(2)} \ \ \mbox{ e } \ \  \bigoplus_{k=r+1}^{r+s} V_{i_k}^{(1)} \otimes V_{j_k}^{(2)}.$$
	Hence, $\left( \pi_{s_*-\varepsilon}^-, V_{s_*-\varepsilon}^-\right)$ e $\left( \pi_{s_*+\varepsilon}^-, V_{s_*+\varepsilon}^- \right)$ are non-equivalent and the result follows from the Theorem~\ref{thm:bifeq}.	
\end{proof}

\addcontentsline{toc}{chapter}{\protect\numberline{}{Bibliografia}}

\end{document}